\documentclass[10pt]{article}
\usepackage[a4paper,textwidth=120mm,textheight=195mm]{geometry}
\usepackage{amsmath,amssymb,amsthm,mathtools,bm}
\usepackage{booktabs,graphicx,subcaption,array}
\usepackage{algorithm,algpseudocode}
\usepackage{enumitem,microtype,xcolor}
\usepackage{hyperref,cleveref}
\usepackage[backend=biber,style=numeric,sorting=none,giveninits=true,doi=false,url=false,isbn=false,maxbibnames=99]{biblatex}
\AtBeginBibliography{\fontsize{8}{8.2}\selectfont\setlength{\bibitemsep}{0pt}\setlength{\bibparsep}{0pt}}
\setlist{nosep,leftmargin=1.6em}
\hypersetup{colorlinks=true,linkcolor=blue!55!black,citecolor=blue!55!black,urlcolor=blue!55!black,
  pdftitle={Certified Seventh-Order Two-Derivative Hermite Deferred Correction via Node-Sweep Co-Design},
  pdfauthor={Zhixin Huo}}

\newcommand{\dt}{\Delta t}
\newcommand{\Rone}{\mathcal R^{(1)}}
\newcommand{\Rtwo}{\mathcal R^{(2)}}
\newcommand{\OO}{\mathcal O}

\newcommand{\RR}{\mathbb R}
\newcommand{\norm}[1]{\left\lVert#1\right\rVert}
\newcommand{\Cseven}{C_7}
\newcommand{\Jstiff}{J_{\mathrm{stiff}}}
\newcommand{\Jtree}{J_{\mathrm{tree},2}}
\newcommand{\Minf}{M_{\infty}}

\theoremstyle{plain}
\newtheorem{theorem}{Theorem}[section]
\newtheorem{lemma}[theorem]{Lemma}
\newtheorem{proposition}[theorem]{Proposition}
\newtheorem{corollary}[theorem]{Corollary}
\theoremstyle{definition}

\newtheorem{remark}[theorem]{Remark}

\title{Certified Seventh-Order Two-Derivative\\
Hermite Deferred Correction via Node--Sweep Co-Design}
\author{Zhixin Huo\thanks{Corresponding author: \texttt{zhixinhuo@hpu.edu.cn}}\\
School of Mathematics and Information Science,\\
Henan Polytechnic University, Jiaozuo 454000, Henan, China}
\date{}

\begin{document}
\maketitle

\begin{abstract}
Two-derivative Hermite deferred correction combines high collocation order with sequential single-state solves, but the stopped method depends jointly on the nodes and the correction sweep.  We co-design these ingredients for diffusion-dominated semilinear problems.  For three subintervals, an H4 predictor, and two corrections, the complete order-seven B-series defect has rank one in the 48-dimensional rooted-tree space: two corrections apply two unary graftings to the one-directional order-five predictor defect.  Hence one scalar chain coefficient controls every nonlinear principal-error condition.  Rational nodes and an isolated algebraic correction parameter cancel this coefficient; coprimality with the order-eight chain polynomial proves classical order exactly seven.  A complementary design, Accuracy-P40, retains generic sixth order but reduces the complete principal-error norm to $9.8\%$ of the LGL--L3 value while satisfying $J_{\mathrm{stiff}}<0.40$.  We also distinguish convergence of repeated corrections from absolute stability after a fixed number of sweeps: two corrections have finite negative-real stability intervals, whereas a third correction restores far-stiff output damping for the new designs.  High-precision nonlinear order tests verify sixth versus seventh order, and Allen--Cahn and Cahn--Hilliard calculations show that Accuracy-P40 reduces correction and Krylov work.
\end{abstract}

\noindent\textbf{Keywords.} multiderivative time integration; Hermite--Birkhoff collocation; deferred correction; B-series and rooted trees; semilinear parabolic equations; strong-stiff correction.\par
\noindent\textbf{MSC 2020.} 65L05; 65L06; 65L20; 65M12.

\section{Introduction}
\label{sec:intro}
Consider the autonomous initial-value problem
\begin{equation}
 u_t=\Rone(u),\qquad
 \Rtwo(u)=\frac{d}{dt}\Rone(u(t))=\Rone_u(u)\Rone(u),
 \label{eq:ode}
\end{equation}
so that $u_{tt}=\Rtwo(u)$.  Two-derivative data arise naturally in Lax--Wendroff, generalized Riemann, ADER, gas-kinetic, and multiderivative discontinuous Galerkin discretizations \cite{ChanTsai2010,LiDu2016,ChristliebEtAl2016,GrantGottliebSeal2019,JaustSchuetzSeal2016,Dittmann2021}.  In such settings the same local space--time evolution that supplies a flux or residual can also supply its time derivative, so higher temporal order need not be purchased only through additional stages.

Deferred correction takes a complementary route.  A dense collocation equation is approached through simpler preconditioned updates, permitting a tradeoff among formal order, serial work, parallelism, and robustness \cite{Dutt2000,Minion2003,ChristliebOngQiu2010,ChristliebMacdonaldOng2010,SpeckEtAl2015,Weiser2015,CausleySeal2019,CaklovicEtAl2025,BoltenWimmer2026}.  Arbitrary-order two-derivative Hermite predictor--corrector and deferred-correction methods are established prior art \cite{SchuetzSealZeifang2022,ZeifangSchuetz2022,ZeifangSchuetzSeal2022}.  The question addressed here is narrower and more structural: when only two corrections are affordable, can the temporal nodes and the sequential two-derivative sweep be co-designed so that the full nonlinear principal defect collapses to low dimension, and can that collapse be converted into a rigorously certified order gain?

The target application class is the method-of-lines system
\begin{equation}
 u_t=A_hu+N_h(u),
 \label{eq:semilinear}
\end{equation}
where $A_h$ has a broad dissipative spectrum.  A second-order diffusion discretization has grid-scale eigenvalues of magnitude $\OO(h_x^{-2})$, while Cahn--Hilliard-type fourth-order dynamics can produce $\OO(h_x^{-4})$ stiffness.  One macrostep therefore contains slowly evolving physical modes and strongly damped grid-scale modes.  The dense Hermite quadrature is determined by the nodes, whereas the stopped iteration is governed by the interaction of those nodes with the local preconditioner.  Optimizing either ingredient in isolation does not optimize the complete method.

\paragraph{Central structural contribution.}
For three subintervals, the fourth-order Hermite predictor, and exactly two corrections, a generic sixth-order method has 48 possible rooted-tree defects at local order seven.  The central theorem proves that all 48 defects lie in a single direction: the H4 predictor has a one-directional order-five defect, and the leading action of each correction is a tree-independent scalar stage recurrence followed by unary grafting.  Two corrections therefore map the predictor defect into one one-dimensional order-seven subspace.  Consequently, the scalar chain coefficient $\Cseven$ is not merely a Dahlquist diagnostic; it controls the complete nonlinear principal error.

This structural result yields two complementary methods.  \emph{Certified-E7} uses rational nodes and an algebraically isolated correction parameter to cancel $\Cseven$.  Exact root isolation proves existence and uniqueness of the selected parameter in a rational interval, while coprimality with the order-eight chain-defect polynomial proves that the method has classical order exactly seven.  \emph{Accuracy-P40} remains generically sixth order but reduces the complete symmetry-weighted principal-error norm to $9.8\%$ of the LGL--L3 value while satisfying the strict strong-stiff design constraint $\Jstiff<0.40$.

A second conceptual contribution separates three properties that are often conflated: convergence of repeated corrections, absolute stability of the method stopped after a fixed number of sweeps, and fixed-norm behavior for nonnormal stiff operators.  The strong-stiff stage factor $\rho(\Minf)$ predicts asymptotic correction convergence, but a closed affine formula shows that it does not imply $|R_{3,K}(-\infty)|\le1$ at fixed $K$.  This distinction motivates a third-correction safeguard in the far stiff tail without changing the two-correction order construction.

The evidence is organized around these claims.  The theoretical core is the chain
\begin{align*}
 \text{H4 defect direction}&\ \Longrightarrow\
 \text{tree-independent propagation}\\
 &\ \Longrightarrow\ \text{rank-one defect}\\
 &\ \Longrightarrow\ \text{exact seventh-order certificate}.
\end{align*}
Two independent 60-digit nonlinear tests activate branched elementary differentials and verify sixth versus seventh order dynamically.  Fixed-sweep stability calculations support the damping safeguard, while Allen--Cahn and Cahn--Hilliard experiments test whether the strong-stiff objective predicts reduced Newton--Krylov work.

The remainder is organized as follows.  Section~\ref{sec:targets} defines the design metrics.  Section~\ref{sec:method} presents the dense Hermite formula and causal correction method.  Section~\ref{sec:analysis} proves the rank-one theorem and constructs Certified-E7.  Section~\ref{sec:stiff} analyzes stopped stability and matrix-valued stiff limits.  Sections~\ref{sec:designs} and~\ref{sec:numerics} give the reported configurations and numerical evidence, followed by discussion and conclusions.
\section{Design criteria and claim boundaries}
\label{sec:targets}
Let
\begin{equation}
 0=c_0<c_1<\cdots<c_{s-1}<c_s=1.
 \label{eq:nodes}
\end{equation}
The internal nodes are design variables.  After the third-order conditions are imposed, the local endpoint corrector has one free parameter $\beta$.  In the principal case $s=3$ with two corrections,
\begin{equation}
 \theta=(c_1,c_2,\beta).
 \label{eq:theta}
\end{equation}
The stage count, H4 predictor, dense Hermite collocation target, and number of accuracy-defining corrections are fixed.  The comparison therefore isolates the effect of node--sweep co-design rather than changing the architecture.

\subsection{Strong-stiff contraction and stopped output}
For the scalar test equation $u'=\lambda u$, write $z=\lambda\dt$.  Let $M_\beta(z;\bm c)$ be the homogeneous stage-error matrix for one correction and define
\[
 \Minf(\bm c,\beta)=\lim_{|z|\to\infty}M_\beta(z;\bm c).
\]
The local endpoint formula has a separate stiff damping factor $r_\infty(\beta)$.  The design objective is
\begin{equation}
 \boxed{\Jstiff(\bm c,\beta)=
 \max\{\rho(\Minf(\bm c,\beta)),|r_\infty(\beta)|\}.}
 \label{eq:Jstiff}
\end{equation}
The maximum prevents improvement of the stage iteration at the expense of the local endpoint rule, or conversely.  It is nevertheless an iteration-convergence metric, not the absolute stability function of a method stopped after a prescribed number of corrections.

Let $R_{s,K}(z)$ be the endpoint stability function after the predictor and exactly $K$ corrections.  We therefore report separately
\begin{equation}
 R_\infty^{[K]}=\lim_{x\to\infty}R_{s,K}(-x),\qquad
 L_K^- =\sup\{L>0:|R_{s,K}(-x)|\le1\ \text{for }0\le x\le L\}.
 \label{eq:stoppedMetrics}
\end{equation}
These quantities describe the method actually returned after $K$ sweeps.

\subsection{Complete nonlinear principal error}
A stopped method is a B-series method.  If its global order is $p$, the leading local defect is
\begin{equation}
 \mathcal D_{p+1}(u)=\sum_{\tau\in\mathcal T_{p+1}}E_\tau F(\tau)(u),
 \label{eq:BseriesDefect}
\end{equation}
where $\mathcal T_{p+1}$ is the set of unlabeled rooted trees, $F(\tau)$ is the elementary differential, and $E_\tau$ is the numerical-minus-exact coefficient \cite{Butcher2021}.  We measure the complete symmetry-weighted defect by
\begin{equation}
 \mathcal E_{p+1}=(\sigma(\tau)E_\tau)_{\tau\in\mathcal T_{p+1}},\qquad
 \Jtree=\|\mathcal E_{p+1}\|_2.
 \label{eq:treeObjective}
\end{equation}
For $s=3$ and two corrections, $p=6$ generically and $|\mathcal T_7|=48$.  The scalar chain coefficient
\begin{equation}
 \Cseven=[z^7]R_{3,2}(z)-\frac1{7!}
 \label{eq:C7def}
\end{equation}
is not normally a complete nonlinear measure.  The rank-one theorem proves the special identity
\begin{equation}
 \Jtree=\sqrt{886}\,|\Cseven|
 \label{eq:Jtree}
\end{equation}
for the present H4/two-correction architecture.

Accuracy-P40 is a best-found solution of
\begin{equation}
 \min_{0<c_1<c_2<1,\ \beta\ge1/2}\Jtree
 \quad\text{subject to}\quad \Jstiff\le0.40,
 \label{eq:ParetoProblem}
\end{equation}
whereas Certified-E7 fixes rational nodes and imposes the exact algebraic condition $\Cseven=0$.  The P40 optimization is not claimed globally certified.  The seventh-order statement is certified independently by exact algebra.

The intended regime has a broad negative-real spectrum, tight residual tolerances, and naturally available or moderately priced $\Rtwo$.  The method is less attractive when stiffness is mild, one correction already suffices, or $\Rtwo$ must be constructed solely for the time integrator.  This boundary is retained throughout the numerical interpretation.
\section{Hermite collocation and sequential correction}
\label{sec:method}
Let
\[
 0=c_0<c_1<\cdots<c_s=1,
 \qquad
 U_m\approx u(t_n+c_m\dt),
 \qquad U_0=u^n.
\]
Let $\phi_j,\psi_j\in\mathbb P_{2s+1}$ be the cardinal Hermite basis satisfying
\[
 \phi_j(c_\ell)=\delta_{j\ell},\quad \phi_j'(c_\ell)=0,
 \qquad
 \psi_j(c_\ell)=0,\quad \psi_j'(c_\ell)=\delta_{j\ell}.
\]
For the subinterval $[c_{m-1},c_m]$, define
\begin{equation}
 q_{mj}=\int_{c_{m-1}}^{c_m}\phi_j(\tau)\,d\tau,
 \qquad
 \widehat q_{mj}=\int_{c_{m-1}}^{c_m}\psi_j(\tau)\,d\tau.
 \label{eq:weights}
\end{equation}
Because
\[
 \frac{d}{d\tau}\Rone(u(t_n+\tau\dt))
 =\dt\Rtwo(u(t_n+\tau\dt)),
\]
the Hermite increment is
\begin{equation}
 \mathcal Q_m(\bm U)=
 \dt\sum_{j=0}^s q_{mj}\Rone(U_j)
 +\dt^2\sum_{j=0}^s\widehat q_{mj}\Rtwo(U_j).
 \label{eq:Q}
\end{equation}
The dense background equation is
\begin{equation}
 U_m^\star=U_{m-1}^\star+\mathcal Q_m(\bm U^\star),
 \qquad m=1,\ldots,s.
 \label{eq:collocation}
\end{equation}

\subsection{Order of the dense formula}
The interpolation data consist of the values and first derivatives of $\Rone(u(t_n+\tau\dt))$ at $s+1$ nodes.  The Hermite interpolant therefore has degree at most $2s+1$ and its integral is exact for polynomials through degree $2s+1$.

\begin{theorem}[Dense Hermite collocation order]
\label{thm:collocationOrder}
Let $f=\Rone\in C^{2s+2}$ on a neighborhood of the exact solution over one step.  Assume that the block Jacobian of the collocation equations is nonsingular for all sufficiently small $\dt$.  Then the endpoint collocation stage satisfies
\[
 U_s^\star-u(t_n+\dt)=\OO(\dt^{2s+3}),
\]
and the associated one-step method has classical global order $2s+2$ on every fixed finite-dimensional problem.
\end{theorem}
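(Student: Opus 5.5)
I will prove the endpoint estimate with the standard two-part argument for collocation-type methods: a consistency bound on the dense formula, followed by a stability argument for the block system that transfers that bound to $U_s^\star$. The key simplification is to \emph{telescope}. Summing \eqref{eq:collocation} over $m=1,\dots,s$ gives
\[
 U_s^\star=u^n+\dt\int_0^1 \Pi[\bm U^\star](\tau)\,d\tau,
\]
where $\Pi[\bm U]$ is the Hermite interpolant of degree $2s+1$ with values $\Rone(U_j)$ and derivatives $\dt\,\Rtwo(U_j)$ at the nodes $c_j$. The endpoint value is therefore a Hermite quadrature over the whole interval $[0,1]$ with $2s+2$ conditions. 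Such a rule is exact through degree $2s+1$, and its error constant involves $\prod_j(\tau-c_j)^2$.

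\textbf{Step 1 (local residuals).} Insert the exact values $u_j=u(t_n+c_j\dt)$ into the scheme. Since $\Rtwo(u_j)=\frac{d}{dt}\Rone(u)$, the data are exact Hermite data of $g(\tau)=\Rone(u(t_n+\tau\dt))$, and $g\in C^{2s+2}$ because $f\in C^{2s+2}$. The Hermite remainder $g^{(2s+2)}(\xi)\prod(\tau-c_j)^2/(2s+2)!$ carries the factor $\dt^{2s+2}$ from the chain rule. This gives the stage residuals
\[
 \delta_m=u_m-u_{m-1}-\mathcal Q_m(\bm u)=\OO(\dt^{2s+3})\quad\text{for each }m,
\]
because each subinterval integral of the interpolation error is multiplied by $\dt$. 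Consequently the summed residual $\sum_m\delta_m=\OO(\dt^{2s+3})$ as well.

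\textbf{Step 2 (stability transfer).} Write the error as $\bm e=\bm U^\star-\bm u$ and subtract the two systems. By the mean value theorem,
\[
 e_m-e_{m-1}=\dt\sum_j q_{mj}A_je_j+\dt^2\sum_j\widehat q_{mj}B_je_j-\delta_m,
\]
with $e_0=0$, where $A_j,B_j$ are bounded averaged Jacobians of $\Rone$ and $\Rtwo$. In matrix form this reads $(I-\dt\,\mathcal K)\bm e=-\bm\delta$, with $\mathcal K$ uniformly bounded (using the cumulative-sum form). The assumed nonsingularity of the block Jacobian, together with the uniform boundedness of $\mathcal K$, gives a uniformly bounded inverse for small $\dt$ on a fixed finite-dimensional problem. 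Hence $\bm e=\OO(\dt^{2s+3})$ and in particular $e_s=\OO(\dt^{2s+3})$. Existence and uniqueness of $\bm U^\star$ near $\bm u$ follow from the same bound through the implicit function theorem or a contraction argument.

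\textbf{Step 3 (global order).} The map $u^n\mapsto U_s^\star$ is Lipschitz with constant $1+\OO(\dt)$, again by differentiating the implicit system under the invertibility assumption. The standard Lady Windermere's fan argument (error accumulation over the steps) then converts the local error $\OO(\dt^{2s+3})$ into a global error $\OO(\dt^{2s+2})$. For \emph{classical order exactly} $2s+2$, I would exhibit a nonvanishing error constant: on $u'=u$ the error is proportional to $\int_0^1\prod_j(\tau-c_j)^2d\tau>0$.

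\textbf{Main obstacle.} I expect the delicate point to be Step 2. Interior stages satisfy only the local bound, so the stage residuals $\delta_m$ are each individually $\OO(\dt^{2s+3})$, and the bounded inverse must not lose a power of $\dt$. I would handle this by working with the system scaled so that the operator is $I-\dt\mathcal K$ rather than $\dt^{-1}$ times something. The one thing to verify carefully is that the hypothesized nonsingularity is uniform in $\dt$.
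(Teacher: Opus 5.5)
Your proposal is correct and follows essentially the paper's route. Both arguments insert the exact nodal values and use the Hermite remainder to get $\OO(\dt^{2s+3})$ row residuals. Both then transfer this to the stages through a uniformly bounded inverse of the block system, which is $E\otimes I+\OO(\dt)$, or $I-\dt\mathcal K$ in your cumulative-sum form with right-hand side $E^{-1}\bm\delta$, so no power of $\dt$ is lost; a standard fan/Gronwall argument then gives global order $2s+2$. Your extra sharpness remark goes beyond the paper, which proves only the upper bound. It holds because stage feedback enters only at $\OO(\dt^{2s+4})$, so the endpoint error is $-\dt^{2s+3}u^{(2s+3)}(t_n)\int_0^1\omega\,d\tau/(2s+2)!+\OO(\dt^{2s+4})$, which is nonzero for $u'=u$.
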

\begin{proof}
Put $F_h(\tau)=f(u(t_n+\tau\dt))$ for $0\le\tau\le1$, and let $H_h$ be its cardinal Hermite interpolant at $c_0,\ldots,c_s$.  The interpolation conditions imply
\[
 H_h(c_j)=F_h(c_j),\qquad H_h'(c_j)=F_h'(c_j),
\]
where $F_h'(c_j)=\dt\,\Rtwo(u(t_n+c_j\dt))$.  Hence the increment obtained by inserting the exact nodal values into \eqref{eq:Q} is precisely
\[
 \dt\int_{c_{m-1}}^{c_m}H_h(\tau)\,d\tau.
\]
For vector-valued $F_h$, apply the scalar Hermite remainder to every bounded linear functional on the phase space.  Uniformly for $\tau\in[0,1]$,
\[
 F_h(\tau)-H_h(\tau)
 =\frac{F_h^{(2s+2)}(\xi_\tau)}{(2s+2)!}
   \omega(\tau),
 \qquad
 \omega(\tau)=\prod_{j=0}^s(\tau-c_j)^2.
\]
Repeated differentiation with respect to the scaled variable gives
$F_h^{(2s+2)}=\OO(\dt^{2s+2})$.  Therefore the exact nodal vector
$\bar U_m=u(t_n+c_m\dt)$ has collocation residual
\[
 \bar U_m-\bar U_{m-1}-\mathcal Q_m(\bar{\bm U})
 =\dt\int_{c_{m-1}}^{c_m}(F_h-H_h)(\tau)\,d\tau
 =\OO(\dt^{2s+3}).
\]
Let $\mathcal F_h(\bm U)=0$ denote the full active-stage collocation system.  Its Jacobian at $\dt=0$ is $E\otimes I$, which is invertible; by assumption the inverse remains bounded for small $\dt$.  The mean-value formula and the inverse-function estimate then give
\[
 \norm{\bm U^\star-\bar{\bm U}}
 \le C\norm{\mathcal F_h(\bar{\bm U})}
 =\OO(\dt^{2s+3}).
\]
In particular the endpoint has local truncation error of order $2s+3$.  The one-step map is locally Lipschitz because it is obtained from a locally unique smooth branch of the stage equations.  The standard local-to-global argument, or a discrete Gronwall estimate over $\OO(\dt^{-1})$ steps, yields global order $2s+2$.
\end{proof}

\subsection{Matrix notation}
Let $E\in\RR^{s\times s}$ be the lower-bidiagonal difference matrix
\[
 E=\begin{pmatrix}
 1&&&\\[-1mm]
 -1&1&&\\
 &\ddots&\ddots&\\
 &&-1&1
 \end{pmatrix},
\]
and partition the Hermite weight matrices into the anchor column and the active columns,
\[
 Q_1=[q_0\mid Q_{1,a}],
 \qquad
 Q_2=[\widehat q_0\mid Q_{2,a}].
\]
On the scalar test equation, the active collocation stages satisfy
\begin{equation}
 A_Q(z)U=b_Q(z)u^n,
 \qquad
 A_Q(z)=E-zQ_{1,a}-z^2Q_{2,a}.
 \label{eq:AQ}
\end{equation}
This form is useful for the finite-sweep stability function and the stiff-limit derivation.  The code constructs the Hermite weights from the cardinal basis rather than from hard-coded tables, so the same implementation supports optimized and classical nodes.

\subsection{Predictor and correction sweep}
Set $\delta_m=(c_m-c_{m-1})\dt$.  The fourth-order sequential Hermite predictor is
\begin{align}
 U_m^{[0]}={}&U_{m-1}^{[0]}
 +\frac{\delta_m}{2}\left[\Rone(U_{m-1}^{[0]})+\Rone(U_m^{[0]})\right]\notag\\
 &+\frac{\delta_m^2}{12}\left[\Rtwo(U_{m-1}^{[0]})-\Rtwo(U_m^{[0]})\right].
 \label{eq:H4}
\end{align}
Each row contains only one new nonlinear unknown.  In contrast, solving the dense collocation system directly would couple all active stages.

Start from the general two-derivative endpoint increment
\begin{equation}
 \delta\{a\Rone(v)+b\Rone(w)\}
 +\delta^2\{c\Rtwo(v)+d\Rtwo(w)\}.
 \label{eq:generalEndpoint}
\end{equation}
Imposing consistency and third-order conditions gives
\[
 a+b=1,
 \qquad
 b+c+d=\frac12,
 \qquad
 \frac b2+d=\frac16.
\]
Writing $b=\beta$ leaves the one-parameter family
\begin{align}
 \mathcal P_{\beta,\delta}(v,w)={}&
 \delta\{(1-\beta)\Rone(v)+\beta\Rone(w)\}\notag\\
 &+\delta^2\left\{\left(\frac13-\frac\beta2\right)\Rtwo(v)
 +\left(\frac16-\frac\beta2\right)\Rtwo(w)\right\}.
 \label{eq:Pbeta}
\end{align}

Given $\bm U^{[k]}$, one correction sweep is
\begin{align}
 U_m^{[k+1]}={}&U_{m-1}^{[k+1]}
 +\mathcal P_{\beta,\delta_m}(U_{m-1}^{[k+1]},U_m^{[k+1]})\notag\\
 &-\mathcal P_{\beta,\delta_m}(U_{m-1}^{[k]},U_m^{[k]})
 +\mathcal Q_m(\bm U^{[k]}).
 \label{eq:correction}
\end{align}
The nodes are updated in the causal order $U_1\to\cdots\to U_s$.  At a fixed point the two local increments cancel and \eqref{eq:collocation} remains.

\begin{proposition}[Sequential solvability and fixed point]
\label{prop:solvability}
Assume $\Rone,\Rtwo\in C^1$ near the exact step solution.  For sufficiently small $\dt$, every predictor and correction row has a locally unique solution obtained in causal order.  Moreover, a stage vector is a fixed point of the correction iteration if and only if it solves the dense Hermite collocation equations.
\end{proposition}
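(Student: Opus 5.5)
The plan is to treat each row as a small perturbation of the identity and apply the implicit function theorem (or a contraction argument), working in causal order. For the predictor row \eqref{eq:H4}, with $U_{m-1}^{[0]}$ already known, I would write the equation as $G_m(W;\dt)=0$ with
\[
 G_m(W;\dt)=W-U_{m-1}^{[0]}-\frac{\delta_m}{2}\bigl[\Rone(U_{m-1}^{[0]})+\Rone(W)\bigr]-\frac{\delta_m^2}{12}\bigl[\Rtwo(U_{m-1}^{[0]})-\Rtwo(W)\bigr].
\]
Its Jacobian in $W$ is
\[
 \partial_WG_m=I-\tfrac{\delta_m}{2}\Rone_u(W)+\tfrac{\delta_m^2}{12}\Rtwo_u(W).
\]
Since $\Rone,\Rtwo\in C^1$, the derivatives $\Rone_u,\Rtwo_u$ are bounded on a fixed closed ball $B$ around the exact trajectory over the step. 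So for $\dt$ small the Jacobian is $I+\OO(\dt)$, uniformly invertible on $B$. At $\dt=0$ the root is $W=U_{m-1}^{[0]}$. The implicit function theorem, or equivalently Banach's fixed-point theorem applied to $W\mapsto W-G_m(W;\dt)$ (a contraction with constant $\OO(\dt)$ on $B$), then gives a locally unique solution depending continuously on $\dt$. Inductively in $m$, one also gets $U_m^{[0]}=u^n+\OO(\dt)$, so all stages stay inside $B$. The same argument applies to each correction row \eqref{eq:correction}. There the only unknown is $U_m^{[k+1]}$, and it enters through $W-\mathcal P_{\beta,\delta_m}(U_{m-1}^{[k+1]},W)$. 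That term has Jacobian $I-\delta_m\beta\Rone_u-\delta_m^2(\tfrac16-\tfrac\beta2)\Rtwo_u=I+\OO(\dt)$. All other terms are already computed: $U^{[k]}$ from the previous sweep, and $U_{m-1}^{[k+1]}$ from the current sweep in causal order.

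The one point needing care, and the main obstacle, is uniformity. The inputs $U^{[k]}$ must remain in $B$ with increments of size $\OO(\dt)$, so that the neighborhood and the smallness threshold on $\dt$ do not shrink from sweep to sweep. I would handle this by induction over the finitely many sweeps $k\le K$. For fixed $K$, each explicit term $\mathcal Q_m(\bm U^{[k]})$ and $\mathcal P(U^{[k]}_{m-1},U^{[k]}_m)$ is $\OO(\dt)$ with constants depending only on bounds of $\Rone,\Rtwo$ on $B$. Hence $U_m^{[k+1]}-u^n=\OO(\dt)$, and a single choice of $\dt_0$, depending on $K$ and $B$, works throughout.

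For the fixed-point statement, suppose $\bm U^{[k+1]}=\bm U^{[k]}=\bm U$ in \eqref{eq:correction}. Then the two $\mathcal P_{\beta,\delta_m}$ terms have identical arguments and cancel, leaving $U_m=U_{m-1}+\mathcal Q_m(\bm U)$ for every $m$, which is exactly \eqref{eq:collocation}. Conversely, suppose $\bm U^\star$ solves \eqref{eq:collocation}, and insert $\bm U^{[k]}=\bm U^\star$. Then $\bm U^\star$ satisfies every row of the correction sweep, because the $\mathcal P$ terms cancel and the collocation equation remains. By the local uniqueness just established, row by row in causal order, the sweep returns $\bm U^{[k+1]}=\bm U^\star$. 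The argument therefore applies to stage vectors in $B$, that is, on the locally unique branch near the exact solution, and this is the sense in which I would state the "if and only if".
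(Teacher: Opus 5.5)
Your proposal is correct and follows the paper's proof. Both arguments apply the implicit function theorem row by row in causal order, using the row Jacobians $I-\frac{\delta_m}{2}\Rone_u+\frac{\delta_m^2}{12}\Rtwo_u$ and $I-\beta\delta_m\Rone_u-(\frac16-\frac\beta2)\delta_m^2\Rtwo_u=I+\OO(\dt)$, and both obtain the fixed-point equivalence by cancelling the two $\mathcal P_{\beta,\delta_m}$ terms; your uniformity over finitely many sweeps and your use of local uniqueness for the converse only make explicit what the paper leaves implicit.
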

\begin{proof}
For a predictor row, with the left stage fixed, write the residual as
\[
 \mathcal G_m(w)=w-v-\frac{\delta_m}{2}\{\Rone(v)+\Rone(w)\}
 -\frac{\delta_m^2}{12}\{\Rtwo(v)-\Rtwo(w)\}.
\]
Its derivative with respect to the new stage is
\[
 D_w\mathcal G_m(w)=I-\frac{\delta_m}{2}\Rone_u(w)
 +\frac{\delta_m^2}{12}\Rtwo_u(w)=I+\OO(\dt).
\]
The same calculation for a correction row gives
\[
 I-\beta\delta_m\Rone_u(w)
 -\left(\frac16-\frac\beta2\right)\delta_m^2\Rtwo_u(w)
 =I+\OO(\dt).
\]
Both derivatives are invertible for sufficiently small $\dt$, uniformly on a fixed neighborhood.  The implicit-function theorem therefore gives a unique local solution for row $m$ once rows $0,\ldots,m-1$ are known.  Induction over $m$ proves causal solvability of a full predictor or correction sweep.

If $\bm U^{[k+1]}=\bm U^{[k]}=\bm U$, the two occurrences of
$\mathcal P_{\beta,\delta_m}$ in \eqref{eq:correction} cancel, leaving
$U_m=U_{m-1}+\mathcal Q_m(\bm U)$ for every $m$.  Conversely, a solution of these collocation equations makes the correction residual vanish when used on both sides, and hence is a fixed point.
\end{proof}

\begin{algorithm}[t]
\caption{Stopped two-derivative Hermite correction with order safeguards}
\label{alg:method}
\begin{algorithmic}[1]
\Require $u^n$, nodes $\bm c$, parameter $\beta$, minimum and maximum corrections $K_{\min}\le K_{\max}$, tolerances $\varepsilon_{\rm corr}$ and $\varepsilon_{\rm row}$
\State Compute the H4 predictor \eqref{eq:H4} sequentially, solving every row to tolerance $\varepsilon_{\rm row}$.
\For{$k=0,\ldots,K_{\max}-1$}
 \State Evaluate the scaled dense-collocation residual.
 \If{$k\ge K_{\min}$ and the residual is below $\varepsilon_{\rm corr}$} \State stop; \EndIf
 \For{$m=1,\ldots,s$}
   \State solve the single-state nonlinear equation \eqref{eq:correction} to tolerance $\varepsilon_{\rm row}$;
 \EndFor
\EndFor
\State Set $u^{n+1}=U_s$.
\end{algorithmic}
\end{algorithm}

\subsection{Residual stopping and computational work}
For an active stage vector $\bm U=(U_1,\ldots,U_s)$, define the row residuals
\[
 \mathcal F_m(\bm U)=
 U_m-U_{m-1}-\mathcal Q_m(\bm U).
\]
The implementation stops when
\begin{equation}
 \frac{\max_m\norm{\mathcal F_m(\bm U)}}
 {1+\max_m\norm{U_m}}
 \le\varepsilon_{\rm corr}.
 \label{eq:residualStop}
\end{equation}
All reported sweep counts mean corrections performed after the H4 predictor.  The predictor itself is never counted as a correction sweep.  For Certified-E7 we require $K_{\min}=2$ unless the omitted corrections are independently shown to be below the order-eight local error scale.

\begin{proposition}[Inexact row solves and early stopping]
\label{prop:inexact}
Fix the number of stages and completed corrections.  Suppose the stopped one-step map is uniformly locally Lipschitz and all causal row inverse Jacobians are uniformly bounded for $0<\dt\le\dt_0$.  If the sum of normalized predictor, row, and stopping residuals in macrostep $n$ is at most $\eta_n$, then the computed endpoint differs from the exactly solved stopped method by at most $C\eta_n$.  On a fixed time interval the accumulated perturbation is $\OO(\max_n\eta_n/\dt)$.  Hence an order-$p$ stopped method retains order $p$ whenever
\begin{equation}
 \eta_n=\OO(\dt^{p+1}).
 \label{eq:inexactScaling}
\end{equation}
In particular, Certified-E7 requires two completed corrections and local row/collocation residuals of order $\OO(\dt^8)$.
\end{proposition}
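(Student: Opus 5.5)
The plan is a perturbation argument in two layers: first within one macrostep, then accumulated over the fixed time interval.

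\emph{Within one macrostep.} Let $\bm U^{[k]}$ be the exactly solved stopped iterates and $\widetilde{\bm U}^{[k]}$ the computed ones. For the computed stages, row $m$ of the predictor \eqref{eq:H4}, or of a correction \eqref{eq:correction}, is satisfied up to a residual $r_m^{[k]}$. Thus $\widetilde U_m^{[k]}$ solves a perturbed row equation $\mathcal G_m(w)=r_m^{[k]}$, with $\mathcal G_m$ the row residual map from the proof of Proposition~\ref{prop:solvability}. Comparing with the exact row solution, I will use the mean-value formula together with the assumed uniform bound on the causal row inverse Jacobians. This gives
\[
\norm{\widetilde U_m^{[k]}-U_m^{[k]}}\le C\bigl(\norm{r_m^{[k]}}+\norm{\widetilde U_{m-1}^{[k]}-U_{m-1}^{[k]}}+\max_j\norm{\widetilde U_j^{[k-1]}-U_j^{[k-1]}}\bigr).
\]
The left-stage and previous-sweep inputs enter through locally Lipschitz maps $\Rone,\Rtwo$, $\mathcal P_{\beta,\delta}$ and $\mathcal Q_m$, each with coefficients $\OO(1)$. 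The stage count and sweep count are fixed, so I can induct first over $m$ in causal order and then over $k$. This bounds the endpoint deviation by $C\sum\norm{r_m^{[k]}}\le C\eta_n$, where $C$ depends only on $s$, $K$ and the Lipschitz and inverse bounds.

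\emph{Early stopping.} If the iteration stops at some $k\ge K_{\min}$, I treat the stopping residual \eqref{eq:residualStop} as one more perturbation. For Certified-E7 the relevant comparison is with the stopped method at exactly $K$ corrections. Performing extra corrections changes the endpoint only by a quantity controlled by the dense residual, using the invertibility of the collocation Jacobian from Theorem~\ref{thm:collocationOrder}. Omitting corrections is excluded by the requirement $K_{\min}=2$. This is where I expect the main subtlety: the statement bundles ``stopping residuals'' into $\eta_n$. I would make precise that the deviation is measured against whichever stopped map is returned, and that every such map has the claimed order once $k\ge K_{\min}$.

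\emph{Accumulation over steps.} Write $\Phi$ for the exact stopped one-step map and $\widetilde\Phi$ for the computed one, with $\norm{\widetilde\Phi(v)-\Phi(v)}\le C\eta_n$. Local Lipschitz continuity of $\Phi$ with constant $1+L\dt$ holds for a one-step map of the form $v+\dt\,\Psi(v)$. The discrete Gronwall inequality then gives
\[
\norm{\tilde u^N-u^N}\le \sum_n e^{LT}C\eta_n\le C'\,\frac{\max_n\eta_n}{\dt}.
\]
Here the two sequences start from the same data, and the perturbed trajectory stays in the neighborhood by a standard bootstrap, since the bound tends to zero. If $\eta_n=\OO(\dt^{p+1})$, this contribution is $\OO(\dt^p)$ and adds to the order-$p$ global error of the stopped method. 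With $p=7$, this yields the stated requirement for Certified-E7: two completed corrections and residuals of order $\OO(\dt^8)$.
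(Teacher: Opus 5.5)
Your proposal is correct and is essentially the paper's proof: mean-value bounds with uniformly bounded causal row inverse Jacobians and induction over the fixed numbers of rows and sweeps give the per-step bound $C\eta_n$, and then $\|e_{n+1}\|\le(1+L\dt)\|e_n\|+C\eta_n$ with discrete Gronwall over $\OO(\dt^{-1})$ steps gives $\OO(\max_n\eta_n/\dt)$. Your additions (the $1+L\dt$ form of the Lipschitz constant, the bootstrap, and comparing against whichever stopped map is returned) only sharpen the paper's terse argument. One correction: extra corrections are harmless because each returned map with $k\ge K_{\min}$ still has order at least $p$, not because the dense residual controls their effect; the dense residual together with \eqref{eq:sweepGain} is what controls omitted corrections.
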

\begin{proof}
For each causal row, the mean-value identity and the bounded inverse-Jacobian assumption bound the new-stage perturbation by the row residual plus perturbations of already available stages and old-sweep data.  Induction over the fixed number of rows and sweeps gives a macrostep endpoint perturbation $C\eta_n$.  If $e_n$ is the accumulated perturbation, local Lipschitz continuity yields
\[
 \|e_{n+1}\|\le(1+L\dt)\|e_n\|+C\eta_n.
\]
Discrete Gronwall over $N=\OO(\dt^{-1})$ steps gives $\max_n\|e_n\|\le C_T\max_n\eta_n/\dt$, proving the stated scaling.
\end{proof}

The constants can depend on the spatial grid through row Jacobians, resolvent bounds, and nonlinear Lipschitz constants.  The result establishes classical order on each fixed semidiscretization; a mesh-uniform stiff convergence theorem would require additional regularity and resolvent assumptions.  In the sparse phase-field implementation, Newton iteration solves each single-state row and GMRES solves the Newton systems with reused ILU factors.
\section{Accuracy and stability analysis}
\label{sec:analysis}
\subsection{Endpoint stability family}
For the scalar equation $u'=\lambda u$, let $z=\lambda\delta$.  Applying \eqref{eq:Pbeta} to one endpoint step gives
\begin{equation}
 R_\beta(z)=
 \frac{1+(1-\beta)z+(\frac13-\frac\beta2)z^2}
 {1-\beta z+(\frac\beta2-\frac16)z^2}.
 \label{eq:Rbeta}
\end{equation}
The parameter changes both the implicit first-derivative weight and the two second-derivative weights.  In particular, $\beta=2/3$ gives the classical L3 rule, whereas smaller admissible values trade exact local stiff decay for a different correction preconditioner.

\begin{theorem}[Exact endpoint classification]
\label{thm:Astable}
The family \eqref{eq:Rbeta} is $A$-stable if and only if $\beta\ge1/2$.  For $\beta>1/2$,
\begin{equation}
 r_\infty(\beta)=\lim_{|z|\to\infty}R_\beta(z)
 =\frac{2-3\beta}{3\beta-1},
 \label{eq:rinf}
\end{equation}
and the unique $L$-stable member is $\beta=2/3$.
\end{theorem}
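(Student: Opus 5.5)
We write $R_\beta=P/Q$ with $P(z)=1+(1-\beta)z+(\tfrac13-\tfrac\beta2)z^2$ and $Q(z)=1-\beta z+(\tfrac\beta2-\tfrac16)z^2$, and use the standard characterization: $R_\beta$ is $A$-stable if and only if (i) $R_\beta$ is analytic in the open left half-plane $\operatorname{Re}z<0$, and (ii) the $E$-polynomial $E(y)=|Q(iy)|^2-|P(iy)|^2$ is nonnegative for all real $y$. The argument is then a direct computation of these two conditions as functions of $\beta$, followed by a limit computation for $r_\infty$.

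\textbf{Step 1 (the $E$-polynomial).} Since $R_\beta$ is a third-order approximation to $e^z$, $E(y)$ is even and its $y^2$ coefficient vanishes, so $E(y)=e_4y^4$. Expanding the $y^4$ coefficients gives
\[
 e_4=(\tfrac\beta2-\tfrac16)^2-(\tfrac13-\tfrac\beta2)^2=\tfrac16(2\beta-1),
\]
which uses $a^2-b^2=(a-b)(a+b)$ with $a-b=\beta-\tfrac12$ and $a+b=\tfrac16$. As a sanity check, the $y^2$ coefficient is $\beta^2-2(\tfrac\beta2-\tfrac16)-(1-\beta)^2+2(\tfrac13-\tfrac\beta2)=0$. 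Hence $E\ge0$ on $\RR$ if and only if $\beta\ge1/2$.

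\textbf{Step 2 (poles of $R_\beta$).} The roots of $Q$ must lie in $\operatorname{Re}z\ge0$, with any imaginary-axis root cancelled by or harmless for the numerator. If $\beta>1/3$, then $Q$ has positive leading coefficient $\tfrac\beta2-\tfrac16$, linear coefficient $-\beta<0$ and constant term $1>0$. The root product is then positive and the root sum $\beta/(\beta/2-1/6)$ is positive, so both roots have positive real part, whether they are real or a complex pair. This covers every $\beta\ge1/2$.

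Conversely, for $\beta<1/2$ Step 1 already gives $E(y)<0$ for large $|y|$. Then $|R_\beta(iy)|>1$ on the imaginary axis, which rules out $A$-stability regardless of the pole locations. The degenerate cases $\beta=1/3$ (linear denominator) and $\beta=0$ therefore need no separate treatment. Together with Step 1 this proves the equivalence.

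\textbf{Step 3 (stiff limit and $L$-stability).} For $\beta>1/2$ the denominator genuinely has degree two, so
\[
 r_\infty=\frac{\tfrac13-\tfrac\beta2}{\tfrac\beta2-\tfrac16}=\frac{2-3\beta}{3\beta-1}.
\]
This vanishes exactly when $\beta=2/3$. For $\beta=1/2$ it equals $1$, which is consistent with $L$-stability failing there; the boundary case only needs a remark. Since $L$-stability means $A$-stability together with $r_\infty=0$, the unique $L$-stable member is $\beta=2/3$.

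The only point needing care is invoking the $E$-polynomial criterion correctly, that is, pairing the pole condition with the imaginary-axis condition. Everything else is short algebra.
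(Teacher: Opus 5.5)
Your proof is correct and follows essentially the paper's route: the imaginary-axis identity for $|Q(iy)|^2-|P(iy)|^2$, the sum/product root-location argument for the denominator when $\beta\ge1/2$, and the ratio of leading coefficients for $r_\infty$; the only difference is that you cite the standard criterion (analyticity in the left half-plane plus $|R(iy)|\le1$), whereas the paper runs the maximum-modulus argument on left half-disks directly. There is one harmless arithmetic slip: $(\beta-\tfrac12)\cdot\tfrac16=\tfrac{2\beta-1}{12}$, not $\tfrac16(2\beta-1)$, but the sign is unchanged and so is every conclusion.
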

\begin{proof}
Write $R_\beta=N_\beta/D_\beta$.  For $z=iy$, direct expansion gives the exact identity
\begin{equation}
 |D_\beta(iy)|^2-|N_\beta(iy)|^2
 =\frac{2\beta-1}{12}y^4.
 \label{eq:imaginaryIdentity}
\end{equation}
If $\beta<1/2$, the right-hand side is negative for every $y\ne0$, so
$|R_\beta(iy)|>1$ and $A$-stability is impossible.

Now let $\beta\ge1/2$ and put $a=\beta/2-1/6>0$.  The denominator is
$D_\beta(z)=az^2-\beta z+1$.  If its roots are real, their sum is $\beta/a>0$ and their product is $1/a>0$, so both roots are positive.  If they are a complex-conjugate pair, each has real part $\beta/(2a)>0$.  Thus $D_\beta$ has no zero in the closed left half-plane.  Identity \eqref{eq:imaginaryIdentity} gives $|R_\beta(iy)|\le1$ on its finite boundary.  Moreover,
\[
 \lim_{|z|\to\infty}|R_\beta(z)|
 =\left|\frac{1/3-\beta/2}{\beta/2-1/6}\right|
 =\left|\frac{2-3\beta}{3\beta-1}\right|\le1
 \qquad(\beta\ge1/2).
\]
Apply the maximum-modulus principle on left half-disks and pass to the limit in the radius; this proves $|R_\beta(z)|\le1$ whenever $\operatorname{Re}z\le0$.

The ratio of the quadratic coefficients gives \eqref{eq:rinf}.  An $A$-stable rational method is $L$-stable precisely when this limit is zero, which occurs only for $\beta=2/3$.
\end{proof}

\begin{remark}
The classification concerns the local endpoint formula.  It does not determine the convergence of the complete Hermite correction iteration, which also depends on the node matrices.  This distinction is precisely why $\beta$ and the nodes are designed together.
\end{remark}

\subsection{Finite-sweep lower bound and generic sharpness}
Throughout this subsection the phase-space dimension and number of stages are fixed.  Assume that $\Rone$ and $\Rtwo$ are sufficiently smooth and that the inverse Jacobians of the causal predictor and correction rows remain bounded for $0<\dt\le\dt_0$.

\begin{theorem}[Stopped-method lower bound]
\label{thm:order}
With an H4 predictor and $K$ completed correction sweeps, the stopped method has classical global order at least
\begin{equation}
 p_{s,K}^{\rm lb}=\min\{4+K,2s+2\}.
 \label{eq:order}
\end{equation}
If a defect coefficient at the first potentially nonzero order is not identically zero in the admissible design parameters, then the lower bound is attained on an open dense subset of every connected nonsingular parameter region.  Exceptional analytic zero sets may produce superconvergence.
\end{theorem}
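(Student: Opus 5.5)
The plan is to compare each iterate $\bm U^{[k]}$ with the dense collocation solution $\bm U^\star$ of Theorem~\ref{thm:collocationOrder}, and to show that every correction sweep gains one power of $\dt$ in the stage error. Set $\bm e^{[k]}=\bm U^{[k]}-\bm U^\star$. For the predictor, each H4 row \eqref{eq:H4} is the classical fourth-order two-derivative trapezoidal rule on a subinterval of length $\delta_m=\OO(\dt)$. Its local error is $\OO(\dt^5)$, and causal propagation over the fixed number $s$ of rows is controlled by the bounded row inverse Jacobians. Hence $\bm U^{[0]}$ differs from the exact nodal values by $\OO(\dt^5)$. By Theorem~\ref{thm:collocationOrder}, the exact nodal values differ from $\bm U^\star$ by $\OO(\dt^{2s+3})$. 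Therefore $\|\bm e^{[0]}\|=\OO(\dt^{\min\{5,2s+3\}})$.

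Next I derive the error recursion. Subtract the fixed-point identity of Proposition~\ref{prop:solvability} from \eqref{eq:correction}, and linearize with the mean-value theorem:
\begin{align*}
 e_m^{[k+1]}-e_{m-1}^{[k+1]}
 ={}&\bigl[\mathcal P_{\beta,\delta_m}(U_{m-1}^{[k+1]},U_m^{[k+1]})-\mathcal P_{\beta,\delta_m}(U_{m-1}^\star,U_m^\star)\bigr]\\
 &-\bigl[\mathcal P_{\beta,\delta_m}(U_{m-1}^{[k]},U_m^{[k]})-\mathcal P_{\beta,\delta_m}(U_{m-1}^\star,U_m^\star)\bigr]
 +\bigl[\mathcal Q_m(\bm U^{[k]})-\mathcal Q_m(\bm U^\star)\bigr].
\end{align*}
Every bracket is $\dt$ times a bounded Lipschitz factor (from $\Rone$ with weight $\delta$ or $\dt$, and from $\Rtwo$ with weight $\delta^2$) multiplied by the corresponding error. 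Thus $(E\otimes I+\OO(\dt))\bm e^{[k+1]}=\OO(\dt)\,\bm e^{[k]}$. The bounded causal inverse then gives $\|\bm e^{[k+1]}\|\le C\dt\|\bm e^{[k]}\|$. Iterating $K$ times yields $\|\bm e^{[K]}\|=\OO(\dt^{\min\{5+K,\,2s+3\}})$. Adding the collocation endpoint error gives a local error $\OO(\dt^{\min\{5+K,2s+3\}})$ for the stopped map. This map is locally Lipschitz by the implicit-function argument of Proposition~\ref{prop:solvability}. The Gronwall step used in Proposition~\ref{prop:inexact} then yields global order $\min\{4+K,2s+2\}$, which is \eqref{eq:order}.

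For generic sharpness, the stopped method is a B-series method whose coefficients depend analytically on $(\bm c,\beta)$ on a nonsingular region. The nonsingularity is what keeps the row denominators, such as $1-\beta z+\cdots$ and the Hermite weights, analytic. Each tree defect $E_\tau$ at order $p^{\rm lb}+1$ is therefore a real-analytic function of the parameters. If one of them is not identically zero, its zero set in a connected region is closed with empty interior, so its complement is open and dense. On that complement the leading defect is nonzero. Consequently the local error is exactly of order $p^{\rm lb}+1$ on a problem that realizes the corresponding elementary differential, for example the Dahlquist test equation for the chain tree. The global order is then exactly $p^{\rm lb}$. On the analytic zero sets all defects at that order may vanish, which is the superconvergence the statement allows.

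I expect the main obstacle to be the precise one-power gain per sweep: I must make sure the $\Rtwo$ terms contribute $\OO(\dt^2)$ and not $\OO(\dt)$ after scaling. I also need the initial predictor error of stages relative to $\bm U^\star$, not only relative to the exact solution. A further subtlety is the passage from local to global order, where the error must be measured for the stage vector rather than only the endpoint. This holds because $U_0=u^n$ and the stopped map depends only on $u^n$.
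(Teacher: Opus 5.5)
Your proposal is correct and follows essentially the same route as the paper: an $\OO(\dt^5)$ predictor error relative to the collocation stages, and a one-power gain per sweep from the causal error equation with $I+\OO(\dt)$ row factors. It then adds the $\OO(\dt^{2s+3})$ collocation endpoint error and uses real-analyticity of the B-series coefficients for generic sharpness, with your triangle-inequality step from the exact nodal values to $\bm U^\star$ via \cref{thm:collocationOrder} only making explicit what the paper leaves implicit.
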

\begin{proof}
Let $\bm U^\star$ be the locally unique collocation stage vector and set
$e_m^{[k]}=U_m^{[k]}-U_m^\star$.  The composite H4 predictor is fourth order on each fixed stage, hence
\[
 \max_m\norm{e_m^{[0]}}\le C\dt^5.
\]
Subtract the collocation row from correction row $m$.  Since
$U_m^\star-U_{m-1}^\star=\mathcal Q_m(\bm U^\star)$, the exact error equation is
\begin{align*}
 e_m^{[k+1]}={}&e_{m-1}^{[k+1]}
 +\bigl[\mathcal P_m(U_{m-1}^{[k+1]},U_m^{[k+1]})
       -\mathcal P_m(U_{m-1}^\star,U_m^\star)\bigr]\\
 &-\bigl[\mathcal P_m(U_{m-1}^{[k]},U_m^{[k]})
       -\mathcal P_m(U_{m-1}^\star,U_m^\star)\bigr]
 +\bigl[\mathcal Q_m(\bm U^{[k]})-\mathcal Q_m(\bm U^\star)\bigr].
\end{align*}
The first derivatives of $\mathcal P_m$ are $\OO(\dt)$ and those of
$\mathcal Q_m$ are $\OO(\dt)$ on a fixed finite-dimensional neighborhood.  Move the term involving the new right stage to the left.  Its coefficient is $I+\OO(\dt)$ and has a uniformly bounded inverse.  Causal induction in $m$ then gives
\begin{equation}
 \max_m\norm{e_m^{[k+1]}}
 \le C\dt\max_m\norm{e_m^{[k]}},
 \label{eq:sweepGain}
\end{equation}
for all sufficiently small $\dt$.  Consequently
$\max_m\norm{e_m^{[K]}}=\OO(\dt^{K+5})$.

The exact endpoint differs from the collocation endpoint by
$\OO(\dt^{2s+3})$ by \cref{thm:collocationOrder}.  Therefore the stopped method has local error
\[
 \OO(\dt^{K+5})+\OO(\dt^{2s+3})
 =\OO\bigl(\dt^{\min\{K+5,2s+3\}}\bigr),
\]
and hence global order at least \eqref{eq:order}.

For generic sharpness, restrict attention to a connected region with distinct nodes and nonsingular row Jacobians.  At every fixed tree order, the B-series coefficients are obtained from finitely many additions, products, and inversions of analytic functions of the nodes and $\beta$.  They are therefore real analytic.  If one coefficient at the first candidate order is not the zero function, its zero set has empty interior.  The complement is open and dense, and on that complement the first candidate defect is nonzero, so the lower bound is sharp.
\end{proof}

\begin{corollary}[Generic sixth order for the principal architecture]
\label{cor:generic6}
For $s=3$ and $K=2$, the stopped method is generically sixth order.  At the rational design
\begin{equation}
 (c_1,c_2,\beta)=\left(\frac14,\frac34,\frac23\right),
 \qquad
 \Cseven=-\frac{5483}{19025362944}\ne0.
 \label{eq:genericWitness}
\end{equation}
Certified-E7 belongs to the exceptional algebraic set $\Cseven=0$.
\end{corollary}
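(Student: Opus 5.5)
The plan is to apply \cref{thm:order} with $s=3$, $K=2$, which gives $p_{3,2}^{\rm lb}=\min\{6,8\}=6$. The generic statement then reduces to exhibiting one order-seven B-series coefficient that is not the zero function of $(c_1,c_2,\beta)$ on the admissible connected region $0<c_1<c_2<1$, $\beta\ge1/2$ (where row Jacobians are nonsingular for small $\dt$). The scalar chain coefficient is the natural witness. On $u'=\lambda u$ every elementary differential of the order-seven tall tree reduces to $\lambda^7u$, so $[z^7]R_{3,2}(z)-1/7!$ is exactly the tall-tree coefficient $E_{\tau}$ for the chain tree; here $z=\lambda\dt$ as in \eqref{eq:C7def}. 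If $\Cseven\ne0$ at one admissible point, analyticity implies it is not identically zero. \Cref{thm:order} then yields sixth order on an open dense set, and the lower bound already gives order at least six everywhere.

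The key step is to compute $R_{3,2}$ exactly at $(1/4,3/4,2/3)$. For that I will write both the predictor and the corrections in matrix form on the test equation. The H4 predictor stages satisfy a bidiagonal system $(E-zD_1-z^2D_2)U^{[0]}=b_0u^n$, where $D_1,D_2$ carry the weights $(c_m-c_{m-1})/2$ and $\pm(c_m-c_{m-1})^2/12$. Each correction \eqref{eq:correction} reads
\[
 A_P(z)U^{[k+1]}=\bigl(A_P(z)-A_Q(z)\bigr)U^{[k]}+\bigl(b_Q(z)\bigr)u^n ,
\]
with $A_P$ lower bidiagonal, built from the entries of \eqref{eq:Pbeta} with $\beta=2/3$, and $A_Q$ as in \eqref{eq:AQ}. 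The Hermite weights $q_{mj},\widehat q_{mj}$ for nodes $0,1/4,3/4,1$ are rational and are obtained by integrating the cardinal basis of $\mathbb P_7$. I will Taylor-expand $U^{[2]}_3$ in $z$ through $z^7$ in exact rational arithmetic. Because $A_P$ is triangular with unit diagonal at $z=0$, the Neumann series for $A_P^{-1}$ is finite to any given order, so the expansion is a mechanical rational computation. I will check that coefficients up to $z^6$ equal $1/j!$, as \cref{thm:order} guarantees, which validates the implementation, and then read off $[z^7]$.

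The final step compares $[z^7]$ with $1/5040$ and should give $\Cseven=-5483/19025362944\ne0$. The last sentence of the corollary, that Certified-E7 lies in the exceptional set, is then immediate: that design is defined by imposing $\Cseven=0$, which makes the tall-tree defect vanish, so it lies in the analytic zero set excluded from the generic region. The main obstacle is the exact bookkeeping of the degree-seven Hermite weights and of the stage-to-stage coupling through two sweeps. In practice this is a symbolic computation, and the only real care needed is consistency with the sign conventions of \eqref{eq:H4} and \eqref{eq:Pbeta}, which the order-six consistency check guards against.
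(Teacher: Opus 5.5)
Your proposal is correct and is essentially the paper's proof: \cref{thm:order} gives order at least six, and the exact rational formal-series expansion of $R_{3,2}$ at $(1/4,3/4,2/3)$ exhibits the nonzero chain coefficient $\Cseven=-5483/19025362944$ (the paper obtains this from its Appendix~\ref{app:audit} recurrence, which you reconstruct in matrix form); generic sharpness then gives sixth order on an open dense set, and Certified-E7 lies in the zero set by construction (\cref{thm:E7}). One small nit: nonvanishing at a single point already shows that the coefficient is not identically zero, and analyticity is needed only in \cref{thm:order}, to make the zero set nowhere dense.
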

\begin{proof}
For $s=3$ and $K=2$, \cref{thm:order} gives global order at least six; the first candidate local defect has order seven.  The exact rational formal-series recurrence in Appendix~\ref{app:audit} evaluates the order-seven chain component at the displayed rational point and gives \eqref{eq:genericWitness}.  Hence that coefficient is not identically zero as a function of the design parameters.  Generic sharpness in \cref{thm:order} now proves sixth order on an open dense set.  The exceptional construction is established in \cref{thm:E7}.
\end{proof}

\subsection{Rooted-tree notation}
A rooted tree is written $\tau=[\tau_1,\ldots,\tau_m]$, with the one-node tree denoted by $\bullet$.  Let $|\tau|$ be its order, $\sigma(\tau)$ its symmetry factor, and
\[
 \mathcal U(\tau)=[\tau]
\]
the unary-grafting operator.  We use the B-series normalization
\begin{equation}
 B(a,hf,u)=u+\sum_{\tau\ne\emptyset}h^{|\tau|}a(\tau)F(\tau)(u).
 \label{eq:Bseries}
\end{equation}
The number of unlabeled rooted trees of orders one through seven is
\[
 1,\ 1,\ 2,\ 4,\ 9,\ 20,\ 48.
\]
For a B-series stage $Y=B(a,hf,u)$ and a tree $\tau=[\tau_1,\ldots,\tau_m]$, let the distinct child multiplicities be $\mu_1,\ldots,\mu_r$.  The composition coefficients used below are
\begin{align}
 a_{f(Y)}(\tau)&=
 \frac{\prod_{i=1}^m a_Y(\tau_i)}{\prod_{j=1}^r\mu_j!},
 \label{eq:fComposition}\\
 a_{g(Y)}(\tau)&=
 \sum_{\theta\in\operatorname{supp}(\tau)}
 \frac{a_{f(Y)}(\theta)
 \prod_{\tau_i\ne\text{one selected }\theta}a_Y(\tau_i)}
 {\prod_{\vartheta}(\mu_\vartheta-\mathbf1_{\vartheta=\theta})!},
 \qquad g=f'f,
 \label{eq:gComposition}
\end{align}
and the exact-flow coefficients satisfy
\begin{equation}
 a_{\rm ex}(\tau)=\frac1{|\tau|}
 a_{f(B(a_{\rm ex}))}(\tau).
 \label{eq:exactTreeRecursion}
\end{equation}
Applying \eqref{eq:fComposition}--\eqref{eq:exactTreeRecursion} to every predictor and correction stage gives a deterministic coefficient recursion for all trees through order eight.

\subsection{Complete order-seven defect}
The main structural fact is that the H4 predictor has a one-directional principal defect and that the leading action of one correction is unary grafting.  To make the argument explicit, let
\[
 d_{m,r}^{[k]}\in\mathbb R^{\mathcal T_r}
\]
denote the order-$r$ B-series coefficient defect at stage $m$ after sweep $k$, measured relative to the dense Hermite collocation stage.  The anchor stage satisfies $d_{0,r}^{[k]}=0$.

\begin{lemma}[H4 principal-defect direction]
\label{lem:predictorDefect}
For every active stage $m$ there exists a scalar $\eta_m$, depending only on the node increments, such that the first predictor defect relative to the dense collocation stage is
\begin{equation}
 d_{m,5}^{[0]}=\eta_m\,a_{\rm ex}|_{\mathcal T_5}.
 \label{eq:predictorDirection}
\end{equation}
All defects of orders below five vanish.
\end{lemma}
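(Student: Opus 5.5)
The plan is to reduce the statement to a classical quadrature-remainder computation for one H4 row, then transport it through the causal recursion. The observation that drives everything is this. The leading local error of the two-point Hermite (H4) rule is a multiple of a \emph{single} time derivative $u^{(5)}$ of the exact solution. The B-series of $\dt^5u^{(5)}(t_n)/5!$ is exactly $\dt^5\sum_{\tau\in\mathcal T_5}a_{\rm ex}(\tau)F(\tau)(u^n)$. So the defect is forced into the direction $a_{\rm ex}|_{\mathcal T_5}$.

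\textbf{Step 1 (one-row residual).} Insert the exact solution $\bar U_m=u(t_n+c_m\dt)$ into the predictor row \eqref{eq:H4}. As in the proof of \cref{thm:collocationOrder}, put $F(\tau)=\Rone(u(t_n+\tau\dt))$. The H4 increment is $\dt\int_{c_{m-1}}^{c_m}H_3$, where $H_3$ is the cubic Hermite interpolant of $F$ at the two endpoints. The Hermite remainder $F^{(4)}(\xi)(\tau-c_{m-1})^2(\tau-c_m)^2/4!$ has a kernel of constant sign, so the mean-value theorem applies. This gives the row residual
\[
 r_m=\frac{(c_m-c_{m-1})^5}{720}\,\dt^5\,u^{(5)}(t_n)+\OO(\dt^6),
\]
using $\int_0^1\tau^2(1-\tau)^2\,d\tau/24=1/720$, $F^{(4)}$ taken in the scaled variable, and replacing the evaluation point by $t_n$ at cost $\OO(\dt^6)$. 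I will fix the sign convention (computed minus exact) at this point.

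\textbf{Step 2 (causal accumulation).} Let $e_m=U_m^{[0]}-\bar U_m$. Subtracting the exact-solution identity from the predictor row gives $e_m=e_{m-1}-r_m+\OO(\dt)(\|e_m\|+\|e_{m-1}\|)$. The coefficient of the new stage is $I+\OO(\dt)$, as in \cref{prop:solvability}. Induction on $m$, starting from $e_0=0$, gives first $\|e_m\|=\OO(\dt^5)$, and then, feeding this back,
\[
 e_m=-\frac{\dt^5}{720}\Bigl(\sum_{j=1}^m(c_j-c_{j-1})^5\Bigr)u^{(5)}(t_n)+\OO(\dt^6).
\]
By \cref{thm:collocationOrder} (indeed by its stage estimate $\|\bm U^\star-\bar{\bm U}\|=\OO(\dt^{2s+3})$), the collocation stages differ from $\bar U_m$ only by $\OO(\dt^9)$. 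Hence the same expansion holds relative to $U_m^\star$.

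\textbf{Step 3 (translation to tree coefficients).} I expand both $U_m^{[0]}$ and $U_m^\star$ as B-series \eqref{eq:Bseries} in $h=\dt$. I also use the identity $u^{(5)}(t_n)=5!\sum_{|\tau|=5}a_{\rm ex}(\tau)F(\tau)(u^n)$, which follows from \eqref{eq:exactTreeRecursion}. Together these give $d^{[0]}_{m,r}=0$ for $r\le4$ and $d^{[0]}_{m,5}=\eta_m a_{\rm ex}|_{\mathcal T_5}$ with
\[
 \eta_m=-\frac{1}{6}\sum_{j=1}^m(c_j-c_{j-1})^5,
\]
which depends only on the node increments.

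\textbf{Main obstacle.} The delicate step is Step 3: passing from an asymptotic vector identity to an identity of tree coefficients. This needs the independence of elementary differentials. There exist vector fields for which the $F(\tau)$, $|\tau|\le5$, are linearly independent. Since the coefficients are universal (independent of $f$), matching powers of $\dt$ on such a field determines each $a(\tau)$ uniquely. I would invoke this standard fact \cite{Butcher2021} rather than reprove it.

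Alternatively, one can bypass analysis entirely and argue algebraically with the composition rules \eqref{eq:fComposition}--\eqref{eq:gComposition}. The order conditions of H4 through order four coincide with the exact ones. At order five, the defect of a two-point Hermite row is a linear functional of the Taylor data that kills degree-$\le3$ polynomials. On the tree level this functional acts through the scalar moment $\int\tau^2(1-\tau)^2$ times the exact coefficient, which reproduces Step 1 tree by tree.
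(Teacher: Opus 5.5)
Your proposal is correct and follows essentially the same route as the paper: the H4 quadrature-error functional annihilates cubics, so it yields a scalar multiple of $u^{(5)}$, whose B-series is $5!\sum_{\theta\in\mathcal T_5}a_{\rm ex}(\theta)F(\theta)$; causal transport with row Jacobian $I+\OO(\dt)$ then leaves this order-five direction unchanged and simply sums the scalars. Your version is more explicit than the paper's in three respects: it gives $\eta_m=-\frac16\sum_{j\le m}(c_j-c_{j-1})^5$, consistent with the $(2,2)$-Pad\'e error constant $1/720$; it uses the $\OO(\dt^{2s+3})$ stage estimate to pass from the exact solution to the collocation stage; and it names the independence of elementary differentials, which the paper uses without comment.
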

\begin{proof}
Consider first one H4 substep of scaled length $d=c_m-c_{m-1}$.  Applied to the exact solution, its quadrature error is the linear functional
\[
 \mathcal L_{d,h}(F)
 =h\int_0^dF(\tau)\,d\tau
 -\frac{dh}{2}\{F(0)+F(d)\}
 -\frac{d^2h}{12}\{F'(0)-F'(d)\},
\]
with $F(\tau)=f(u(t_{m-1}+\tau h))$.  The functional annihilates polynomials of degree at most three.  Taylor expansion at the left endpoint therefore gives
\[
 \mathcal L_{d,h}(F)=\mu_d h^5u^{(5)}(t_{m-1})+\OO(h^6),
\]
where $\mu_d$ is a scalar moment.  In particular, no elementary differential of order below five occurs.

In the B-series normalization \eqref{eq:Bseries}, the time derivative
$u^{(5)}$ is
\[
 u^{(5)}=5!\sum_{\theta\in\mathcal T_5}a_{\rm ex}(\theta)F(\theta).
\]
Thus the local order-five defect of every H4 substep is a scalar multiple of the same vector
$a_{\rm ex}|_{\mathcal T_5}$.  When a defect created on an earlier subinterval is transported through a subsequent exact or fourth-order substep, the derivative of the transport map is $I+\OO(h)$.  Its order-five homogeneous part is therefore unchanged, while all modifications begin at order six.  Summing the transported subinterval defects and inducting over $m$ produces a stage-dependent scalar $\eta_m$ multiplying the common direction, which proves \eqref{eq:predictorDirection}.
\end{proof}

\begin{proposition}[Stage-wise principal-defect propagation]
\label{prop:stagePropagation}
Assume that at sweep $k$ the first stage defects occur at order $r\ge5$ and share one homogeneous tree direction,
\begin{equation}
 d_{m,r}^{[k]}=\lambda_m^{[k]}e_r,
 \qquad m=0,\ldots,s,
 \label{eq:commonDirection}
\end{equation}
with $\lambda_0^{[k]}=0$.  Then one correction cancels the order-$r$ defect.  Its first possible defect has order $r+1$ and has the form
\begin{equation}
 d_{m,r+1}^{[k+1]}
 =\lambda_m^{[k+1]}\mathcal U(e_r),
 \qquad
 \bm\lambda^{[k+1]}=T(\bm c,\beta)\bm\lambda^{[k]},
 \label{eq:stagePropagation}
\end{equation}
where $T$ is a lower-triangular scalar stage matrix independent of the rooted tree contained in $e_r$.
\end{proposition}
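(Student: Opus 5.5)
We work directly with the exact error equation from the proof of \cref{thm:order}, now read at the level of B-series coefficients rather than norms. Write every stage as a B-series and expand each term of
\begin{align*}
 e_m^{[k+1]}={}&e_{m-1}^{[k+1]}
 +\bigl[\mathcal P_m(U_{m-1}^{[k+1]},U_m^{[k+1]})-\mathcal P_m(U_{m-1}^\star,U_m^\star)\bigr]\\
 &-\bigl[\mathcal P_m(U_{m-1}^{[k]},U_m^{[k]})-\mathcal P_m(U_{m-1}^\star,U_m^\star)\bigr]
 +\bigl[\mathcal Q_m(\bm U^{[k]})-\mathcal Q_m(\bm U^\star)\bigr].
\end{align*}
The key observation is that each bracket is a difference of $\dt\,\Rone$ and $\dt^2\Rtwo$ evaluations. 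By the composition rules \eqref{eq:fComposition}--\eqref{eq:gComposition}, a stage perturbation whose lowest-order part is $\dt^r\lambda e_r$ changes $\dt f(Y)$ first at order $r+1$. There it contributes exactly $\lambda\,\mathcal U(e_r)$: the linear term $f'(u)F(\tau)=F([\tau])$ grafts the defect tree onto a new root. Every other contribution is of order at least $r+2$. These include products of a defect child with further children, which raise the order by at least one more, and the $\dt^2 g$ terms, which carry an extra power of $\dt$. Quadratic terms in the defects are of order $2r\ge r+2$, since $r\ge5$.

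The first step is the statement that the order-$r$ defect is cancelled. At order $r$, all the $\mathcal P$ and $\mathcal Q$ brackets vanish because they carry a factor $\dt$. The recursion therefore reads $d_{m,r}^{[k+1]}=d_{m-1,r}^{[k+1]}$. Starting from $d_{0,r}^{[k+1]}=0$, causal induction in $m$ gives $d_{m,r}^{[k+1]}=0$ for all $m$, and all lower orders vanish in the same way. This matches the sweep gain \eqref{eq:sweepGain}.

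The second step is the order $r+1$ computation. The new-sweep $\mathcal P$ terms are now harmless: their order-$(r+1)$ parts involve the order-$r$ defects of $\bm U^{[k+1]}$, which vanish by the first step. For the old-sweep terms, the Hermite row weights $q_{mj}$ apply to $\dt\Rone(U_j^{[k]})$, while the $\widehat q_{mj}$ and $\Rtwo$ parts enter only at order $r+2$. The endpoint weights $(1-\beta)$ and $\beta$ apply to $\dt\Rone$ in $\mathcal P_{\beta,\delta_m}$, with $\delta_m=(c_m-c_{m-1})\dt$. Collecting terms gives
\[
 d_{m,r+1}^{[k+1]}=d_{m-1,r+1}^{[k+1]}
 +\Bigl(\sum_{j=0}^s q_{mj}\lambda_j^{[k]}
 -(c_m-c_{m-1})\bigl[(1-\beta)\lambda_{m-1}^{[k]}+\beta\lambda_m^{[k]}\bigr]\Bigr)\mathcal U(e_r).
\]
Summing over rows with $d_{0,r+1}^{[k+1]}=0$ yields \eqref{eq:stagePropagation} with
\[
 T=E^{-1}\bigl(Q_{1,a}-\Delta P_\beta\bigr)
\]
restricted to the active stages, where $\Delta P_\beta$ is the bidiagonal matrix of endpoint weights. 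The coefficients depend only on $\bm c$ and $\beta$ and are independent of $e_r$. This $T$ is exactly what one obtains from the scalar test equation at first order in $z$.

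One point needs care. The proposition calls $T$ lower-triangular, but the formula above has $Q_{1,a}$ full, so $T$ is in general a full matrix. We would therefore either establish triangularity from the structure of the weights or, more safely, note that the rank-one conclusion needs only tree-independence, and weaken the wording accordingly.

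The main obstacle is the bookkeeping in the second step. We must verify rigorously that no order-$(r+1)$ contribution arises from the $\Rtwo=f'f$ terms, in particular through $\delta^2 f'(Y)f(Y)$ with defective $Y$. Each such term carries $\dt^2$ times a perturbation of order $r$, so it is $\OO(\dt^{r+2})$. We also need the nonlinear compositions in \eqref{eq:gComposition} to produce the pure graft $\mathcal U(e_r)$ with coefficient exactly $\lambda_j$, with no symmetry-factor distortion. This holds because a single defect child at the root with no siblings has multiplicity one.
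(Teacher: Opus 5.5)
Your proposal is correct and follows the paper's proof essentially step for step: a B-series expansion of the correction row, the $\dt^2\Rtwo$ and quadratic terms pushed to order $r+2$, unary grafting through $f'(u)$, and the same causal scalar recursion \eqref{eq:stageScalarMain}; your order-$r$ cancellation by induction over rows is stated more carefully than the paper's version. Your caveat about triangularity is also justified: that recursion contains the full sum $\sum_j q_{mj}^{(1)}\lambda_j^{[k]}$, so only the solve for $\bm\lambda^{[k+1]}$ is a forward substitution, while $T=E^{-1}(Q_{1,a}-DA_\beta)$ is in general full (e.g.\ $T_{12}=q_{12}$), and this is harmless because \cref{thm:rankone} uses only the tree-independence of $T$.
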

\begin{proof}
Write the old stages as formal B-series perturbations of the collocation stages,
\[
 U_j^{[k]}=U_j^\star+h^r\lambda_j^{[k]}e_r+\OO(h^{r+1}).
\]
The Fr\'echet expansion of the value derivative is
\[
 f(U_j^{[k]})-f(U_j^\star)
 =h^r\lambda_j^{[k]}f'(u)e_r+\OO(h^{r+1}).
\]
For a homogeneous B-series vector, multiplication by $f'(u)$ grafts one new root above every represented tree; hence the order-$r+1$ coefficient direction is exactly $\mathcal U(e_r)$.

The second-derivative field $g=f'f$ is smooth, but every occurrence of $g$ in the method is multiplied by $h^2$.  Its first variation caused by an order-$r$ stage perturbation therefore contributes only at order $r+2$.  Quadratic products of stage perturbations also start beyond order $r+1$.  Thus only the $h f$ terms can contribute at the first new order.

At order $r$, the dense residual and the subtracted old preconditioner carry identical stage values, so the correction equation removes the old order-$r$ defect.  At order $r+1$, collect the coefficients of $\mathcal U(e_r)$.  For correction row $m$ one obtains
\begin{equation}
 \lambda_m^{[k+1]}
 =\lambda_{m-1}^{[k+1]}
 +\sum_{j=0}^s q_{mj}^{(1)}\lambda_j^{[k]}
 -\Delta c_m\bigl((1-\beta)\lambda_{m-1}^{[k]}+\beta\lambda_m^{[k]}\bigr),
 \label{eq:stageScalarMain}
\end{equation}
where $q_{mj}^{(1)}$ denotes the value-data Hermite weight.  Terms involving the unknown new right-stage perturbation occur inside $h f(U_m^{[k+1]})$ and therefore first affect order $r+2$; equivalently, the implicit row factor is $I+\OO(h)$ and acts as the identity at order $r+1$.

Equation \eqref{eq:stageScalarMain} is a causal scalar forward substitution.  It defines a lower-triangular matrix $T(\bm c,\beta)$ and contains no tree-specific coefficient.  Hence every component of $e_r$ is propagated with the same scalar stage amplitudes and the sole new tree operation is unary grafting.
\end{proof}

\begin{theorem}[Rank-one order-seven defect]
\label{thm:rankone}
For $s=3$, the H4 predictor, and exactly two corrections, the order-seven local B-series defect satisfies, for every admissible node set and every nonsingular parameter $\beta$,
\begin{align}
 E_\tau&=0,
 &&\tau\notin\mathcal U^2(\mathcal T_5),
 \label{eq:rankzero}\\
 E_{\mathcal U^2(\theta)}&=120a_{\rm ex}(\theta)\Cseven,
 &&\theta\in\mathcal T_5.
 \label{eq:rankvalues}
\end{align}
Consequently,
\begin{equation}
 \norm{(E_\tau)_{\tau\in\mathcal T_7}}_2
 =\sqrt{90}\,|\Cseven|,
 \qquad
 \norm{(\sigma(\tau)E_\tau)_{\tau\in\mathcal T_7}}_2
 =\sqrt{886}\,|\Cseven|,
 \label{eq:treenorm}
\end{equation}
and $\Cseven=0$ is equivalent to all 48 nonlinear seventh-order conditions.
\end{theorem}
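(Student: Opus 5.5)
The plan is to compose \cref{lem:predictorDefect} with two applications of \cref{prop:stagePropagation}, and then to identify the single surviving scalar through the linear test equation. By \cref{lem:predictorDefect}, the predictor stages satisfy $d_{m,r}^{[0]}=0$ for $r<5$ and $d_{m,5}^{[0]}=\eta_m e_5$ with $e_5=a_{\rm ex}|_{\mathcal T_5}$. Since $\lambda_0=\eta_0=0$, this is exactly hypothesis \eqref{eq:commonDirection} with $r=5$.

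\textbf{Step 1: two sweeps.} One application of \cref{prop:stagePropagation} shows that the first correction removes all order-five defects and leaves $d^{[1]}_{m,6}=\lambda^{[1]}_m\mathcal U(e_5)$ with $\bm\lambda^{[1]}=T\bm\eta$. This is again a common homogeneous direction, now at order $r=6\ge5$, with zero anchor component. A second application therefore gives
\[
 d^{[2]}_{m,7}=\lambda^{[2]}_m\,\mathcal U^2(e_5),\qquad \bm\lambda^{[2]}=T^2\bm\eta,
\]
with all lower orders vanishing.

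\textbf{Step 2: passage to the numerical-minus-exact coefficient.} The stage defects are measured relative to the collocation stage. For $s=3$, \cref{thm:collocationOrder} gives $U_3^\star-u(t_n+\dt)=\OO(\dt^{9})$, so the collocation stage contributes nothing at order seven. Hence $E_\tau$ equals the $\tau$-component of $d^{[2]}_{3,7}$.

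Because unary grafting $\mathcal U^2$ is injective from $\mathcal T_5$ into $\mathcal T_7$ and $\mathcal U$ acts on coefficient vectors by relabelling, the vector $\mathcal U^2(e_5)$ is supported on $\mathcal U^2(\mathcal T_5)$. Its entry at $\mathcal U^2(\theta)$ is $a_{\rm ex}(\theta)$. This proves \eqref{eq:rankzero}, and it gives $E_{\mathcal U^2(\theta)}=\Lambda\,a_{\rm ex}(\theta)$ with $\Lambda=\lambda^{[2]}_3$ independent of $\theta$.

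\textbf{Step 3: identifying $\Lambda$.} Apply the same B-series to $f(u)=\lambda u$. There $F(\tau)=0$ except for the tall tree (chain) $t_k$, for which $F(t_k)=\lambda^k u$. The $z^7$ coefficient of the numerical-minus-exact expansion is then $E_{t_7}=[z^7]R_{3,2}(z)-1/7!=\Cseven$.

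Since $t_7=\mathcal U^2(t_5)$ and $a_{\rm ex}(t_5)=1/5!=1/120$, we get $\Lambda/120=\Cseven$, that is $\Lambda=120\Cseven$, which is \eqref{eq:rankvalues}. The one point to check is that the B-series normalization \eqref{eq:Bseries} carries no extra factorials for chains, so that the step-size scaling matches $z=\lambda\dt$.

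\textbf{Step 4: the norms.} The norms in \eqref{eq:treenorm} follow by a finite computation over the nine trees of $\mathcal T_5$. I will tabulate $a_{\rm ex}(\theta)=1/(\sigma(\theta)\gamma(\theta))$ and use $\sigma(\mathcal U^2\theta)=\sigma(\theta)$. The claimed values are
\[
 \sum_\theta\bigl(120\,a_{\rm ex}(\theta)\bigr)^2=90,
 \qquad
 \sum_\theta\bigl(120\,\sigma(\theta)a_{\rm ex}(\theta)\bigr)^2=\sum_\theta\bigl(120/\gamma(\theta)\bigr)^2=886.
\]
Since every $a_{\rm ex}(\theta)>0$, all 48 components vanish if and only if $\Cseven=0$.

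\textbf{Main obstacle.} I expect the main obstacle to be the bookkeeping in Step 2 and in the second use of the proposition. Concretely, I must verify three things:
\begin{itemize}
 \item the proposition's hypothesis of vanishing defects below order $r$ holds exactly after the first sweep, including the anchor;
 \item the ``defect relative to collocation'' at the endpoint coincides with $E_\tau$ modulo $\OO(h^8)$;
 \item the normalization of $\Cseven$ through the chain tree matches the B-series convention.
\end{itemize}
I will settle the last point first by checking it on the generic witness \eqref{eq:genericWitness} via the exact recurrence of Appendix~\ref{app:audit}.
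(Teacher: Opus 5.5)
Your proposal is correct and follows the paper's proof essentially step for step. Like the paper, you combine \cref{lem:predictorDefect} with two applications of \cref{prop:stagePropagation} to get the single endpoint direction $\mathcal U^2(a_{\rm ex}|_{\mathcal T_5})$, fix the scalar through the chain tree $t_7=\mathcal U^2(t_5)$ with $a_{\rm ex}(t_5)=1/120$ to obtain $120\Cseven$, and supply two correct details the paper leaves implicit: the $\OO(\dt^9)$ collocation endpoint error makes collocation-relative defects equal $E_\tau$ at order seven, and the explicit multipliers $120/(\sigma(\theta)\gamma(\theta))$ and $120/\gamma(\theta)$ have squares summing to $90$ and $886$.
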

\begin{proof}
By \cref{lem:predictorDefect}, the first predictor defects have order five and are of the form
$d_{m,5}^{[0]}=\eta_m a_{\rm ex}|_{\mathcal T_5}$.  Apply
\cref{prop:stagePropagation} once.  The order-five defect is cancelled and the order-six defect is a scalar stage amplitude multiplying
$\mathcal U(a_{\rm ex}|_{\mathcal T_5})$.  Apply the proposition a second time.  The order-six defect is cancelled and the order-seven endpoint defect is
\[
 d_{s,7}^{[2]}=\gamma(\bm c,\beta)
 \mathcal U^2(a_{\rm ex}|_{\mathcal T_5})
\]
for one scalar $\gamma$.  This immediately proves \eqref{eq:rankzero} and shows that the image of the complete order-seven defect map is one-dimensional.

It remains to identify $\gamma$.  Let $\theta_{\rm ch}$ be the order-five chain tree.  Its twice-grafted image is the order-seven chain.  The exact-flow coefficient of $\theta_{\rm ch}$ is $a_{\rm ex}(\theta_{\rm ch})=1/5!=1/120$.  By definition, the defect coefficient of the order-seven chain is $\Cseven$.  Therefore
\[
 \Cseven=\gamma a_{\rm ex}(\theta_{\rm ch})=\frac{\gamma}{120},
 \qquad\text{so}\qquad \gamma=120\Cseven.
\]
Substitution gives \eqref{eq:rankvalues} for every $\theta\in\mathcal T_5$.

The nine supported multipliers have squared sums $90$ and, after symmetry weighting, $886$, which gives \eqref{eq:treenorm}.  Exact arithmetic independently enumerates all 48 trees and checks these identities; the parameter-independent conclusion follows from the stage-wise argument above.
\end{proof}

\subsection{Algebraic certification of exact seventh order}
Choose the rational nodes
\begin{equation}
 \bm c_E=\left(0,\frac7{20},\frac{37}{50},1\right).
 \label{eq:Enodes}
\end{equation}
For these nodes, exact formal-series inversion of \eqref{eq:auditIteration} gives
\begin{equation}
 \Cseven(\bm c_E,\beta)=
 \frac{p_7(\beta)}{612698688000000000000000000000},
 \label{eq:C7rational}
\end{equation}
where
\begin{align}
 p_7(\beta)={}&-1783651945616635920000000\,\beta^2\notag\\
 &+2149402403417268979914972\,\beta
 -647191260859839121135681.
 \label{eq:p7explicit}
\end{align}
The denominator in \eqref{eq:C7rational} is positive.  Exact substitution gives
\begin{align}
 p_7\!\left(\frac7{12}\right)&=-\frac{930937749612483555842}{3}<0,\notag\\
 p_7\!\left(\frac35\right)&=\frac{1677403842666678066511}{5}>0,\notag\\
 p_7\!\left(\frac58\right)&=-\frac{1107599961088829877647}{2}<0.
 \label{eq:simpleIsolation}
\end{align}
Because $p_7$ is quadratic, these signs isolate its two roots as
\begin{equation}
 \frac7{12}<\beta_-<\frac35<\beta_E<\frac58.
 \label{eq:Einterval}
\end{equation}
Its discriminant is positive but not a square in $\mathbb Z$.  Hence both roots are algebraic irrational numbers: no rational fraction can represent either root exactly.  The exact method is specified without a decimal expansion by
\begin{equation}
 \beta_E:=\text{the unique root of }p_7(\beta)=0
 \text{ in }\left(\frac35,\frac58\right).
 \label{eq:betaExactDefinition}
\end{equation}
A short decimal, $\beta_E\approx0.61647415$, is used only in tables.

To determine whether the method could have order eight, the same recurrence gives the order-eight chain defect
\begin{equation}
 C_{8,\mathrm{ch}}(\beta)=
 \frac{p_8(\beta)}{58714915271040000000000000000000000000},
 \label{eq:C8rational}
\end{equation}
with primitive numerator
\begin{align}
 p_8(\beta)={}&-127422974006480605680124800000000\,\beta^3\notag\\
 &+46198250277252396819443304013776\,\beta^2\notag\\
 &+96232479174981416895187909363404\,\beta\notag\\
 &-45991604262471842675518344262921.
 \label{eq:p8explicit}
\end{align}
Reduction of \eqref{eq:p7explicit} and \eqref{eq:p8explicit} modulo $11$ gives
\begin{equation}
 \overline p_7=-\beta^2+4\beta-4,
 \qquad
 \overline p_8=-\beta^3-2\beta^2+5\beta+2.
 \label{eq:modPolys}
\end{equation}
The Euclidean algorithm in $\mathbb F_{11}[\beta]$ is
\begin{align}
 \overline p_8&=(\beta-5)\overline p_7+(4-4\beta),\notag\\
 \overline p_7&=(3\beta+2)(4-4\beta)-1.
 \label{eq:modEuclid}
\end{align}
Thus $\gcd(\overline p_7,\overline p_8)=1$.  Since the reductions preserve the polynomial degrees, $p_7$ and $p_8$ are coprime over $\mathbb Q$.

\begin{theorem}[Certified-E7 has exact classical order seven]
\label{thm:E7}
Let $\beta_E$ be defined by \eqref{eq:betaExactDefinition}.  The method with nodes \eqref{eq:Enodes}, the H4 predictor, and exactly two completed corrections has classical global order exactly seven for sufficiently smooth fixed-dimensional problems with locally unique stage solves.
\end{theorem}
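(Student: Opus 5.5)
The proof has two parts: a lower bound of seven and an upper bound of seven. For the lower bound I will combine \cref{thm:order} with \cref{thm:rankone}. With $s=3$ and $K=2$, \cref{thm:order} gives local error $\OO(\dt^7)$, so every B-series defect coefficient of order at most six vanishes. \Cref{thm:rankone} shows that the order-seven defect vector equals $120\,\Cseven$ times a fixed vector supported on $\mathcal U^2(\mathcal T_5)$, for every admissible node set and every nonsingular $\beta$.

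At $\bm c=\bm c_E$, the representation \eqref{eq:C7rational} with positive denominator gives $\Cseven(\bm c_E,\beta_E)=0$, because $p_7(\beta_E)=0$ by \eqref{eq:betaExactDefinition}. Existence and uniqueness of $\beta_E$ in $(3/5,5/8)$ follow from the sign pattern \eqref{eq:simpleIsolation}, the intermediate value theorem, and the fact that a quadratic has at most two roots. Since $\beta_E>1/2$, the endpoint rule is $A$-stable by \cref{thm:Astable}, and the row Jacobians are $I+\OO(\dt)$, so the point is nonsingular. All 48 order-seven conditions therefore hold and the local error is $\OO(\dt^8)$. The standard local-to-global argument used in \cref{thm:order} then gives global order at least seven. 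I also need the collocation floor $2s+3=9>8$ from \cref{thm:collocationOrder}, so the dense target itself does not obstruct order eight.

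For the upper bound I will show that some order-eight defect coefficient is nonzero, namely the chain coefficient $C_{8,\mathrm{ch}}(\beta_E)=[z^8]R_{3,2}(z)-1/8!$. By \eqref{eq:C8rational} it suffices to show $p_8(\beta_E)\neq0$. Suppose instead $p_8(\beta_E)=0$. Then $\beta_E$ is a common root of $p_7$ and $p_8$, so $\gcd(p_7,p_8)$ over $\mathbb Q$ is nonconstant. The mod-11 computation \eqref{eq:modEuclid} rules this out.

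The step I expect to need the most care is making that modular argument rigorous. One approach is Gauss's lemma: a nonconstant common factor over $\mathbb Q$ can be taken primitive in $\mathbb Z[\beta]$, and it divides both polynomials in $\mathbb Z[\beta]$. Its leading coefficient divides the leading coefficient of $p_7$, which is nonzero mod 11 because $\overline p_7$ keeps degree two. So its reduction is a nonconstant common factor of $\overline p_7$ and $\overline p_8$, contradicting $\gcd=1$. A cleaner alternative is to note that the resultant $\operatorname{Res}(p_7,p_8)$ reduces mod 11 to $\operatorname{Res}(\overline p_7,\overline p_8)\neq0$, since degrees are preserved. Either way $p_8(\beta_E)\ne0$.

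Once $C_{8,\mathrm{ch}}(\beta_E)\ne0$, the linear test equation $u'=\lambda u$, or any problem whose chain elementary differential at order eight is nonzero, has local error exactly of size $\dt^8$. Its global error is therefore not $o(\dt^7)$, and the classical order is exactly seven.

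Two implicit points need checking. First, the rational expressions \eqref{eq:C7rational} and \eqref{eq:C8rational} come from the exact recurrence in Appendix~\ref{app:audit}; this is taken as given, like the earlier results. Second, the order-eight chain coefficient is the correct measure at order eight once order seven holds. This is true because the chain tree's coefficient is exactly the Dahlquist coefficient of $R_{3,2}$, and linear problems realize it.
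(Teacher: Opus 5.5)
Your proof is correct and follows the paper's argument. The lower bound comes from \cref{thm:rankone} together with $p_7(\beta_E)=0$. The upper bound comes from the mod-$11$ coprimality of $p_7$ and $p_8$, which forces the order-eight chain defect $C_{8,\mathrm{ch}}(\beta_E)$ to be nonzero.

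The only variation is in the algebraic step. You go directly from a common root to a nonconstant $\gcd$, and justify the lift from $\mathbb F_{11}$ via Gauss's lemma or the resultant. The paper instead uses irreducibility of $p_7$ to make it the minimal polynomial of $\beta_E$ and deduces $p_7\mid p_8$. Your version does not need the nonsquare discriminant, and it makes explicit the degree-preservation argument that the paper only asserts.
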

\begin{proof}
The exact definition of $p_7$ gives $\Cseven(\bm c_E,\beta_E)=0$.  By
\cref{thm:rankone}, every order-seven rooted-tree defect then vanishes, so the local truncation error is $\OO(\dt^8)$ and the global order is at least seven.

The quadratic $p_7$ is primitive and has nonsquare discriminant, so it is irreducible over $\mathbb Q$ and is the minimal polynomial of $\beta_E$.  Suppose, for contradiction, that the method had order at least eight.  Then every order-eight local defect would vanish, in particular the chain defect, and hence $p_8(\beta_E)=0$.  The minimal-polynomial property would imply $p_7\mid p_8$ in $\mathbb Q[\beta]$.  This contradicts the coprimality established by \eqref{eq:modPolys}--\eqref{eq:modEuclid}.  Therefore at least one order-eight defect is nonzero and the classical global order is exactly seven.
\end{proof}

The quadratic has a second root in $(7/12,3/5)$.  Both roots are seventh-order, but exact-formula evaluation gives $\Jstiff=0.6012321\ldots$ for the smaller root and $0.4723251\ldots$ for $\beta_E$; hence the larger root is used.
\section{Strong-stiff design and stopped stability}
\label{sec:stiff}
This section connects the design variables to convergence of the correction iteration and to the output of the method stopped after a finite number of sweeps.  The distinction between formal order and behavior on stiff modes is classical \cite{ProtheroRobinson1974,HairerWanner1996}; here it must additionally be resolved at a prescribed, finite correction count.

\subsection{Finite-parameter correction matrix}
Let $D=\operatorname{diag}(\Delta c_1,\ldots,\Delta c_s)$, where $\Delta c_m=c_m-c_{m-1}$.  The first- and second-derivative parts of the causal sweep generate lower-bidiagonal matrices $A_\beta$ and $B_\beta$.  The leading second-derivative matrix is
\begin{equation}
 B_\beta=\begin{pmatrix}
 \frac\beta2-\frac16&&&\\
 \frac\beta2-\frac13&\frac\beta2-\frac16&&\\
 &\ddots&\ddots&\\
 &&\frac\beta2-\frac13&\frac\beta2-\frac16
 \end{pmatrix}.
 \label{eq:Bbeta}
\end{equation}
The active preconditioner on the scalar test equation has the form
\begin{equation}
 P_\beta(z)=E-zDA_\beta+z^2D^2B_\beta,
 \label{eq:Pmatrix}
\end{equation}
with an anchor term on the right-hand side.  One correction can be written
\begin{equation}
 U^{[k+1]}=M_\beta(z)U^{[k]}+r_\beta(z)u^n,
 \qquad
 M_\beta(z)=I-P_\beta(z)^{-1}A_Q(z).
 \label{eq:finiteIteration}
\end{equation}
The stopped scalar stability function after $K$ corrections is obtained by iterating \eqref{eq:finiteIteration} and projecting onto the endpoint stage:
\begin{equation}
 R_{s,K}(z)=e_s^T\left[
 M_\beta(z)^KU^{[0]}(z)
 +\sum_{\ell=0}^{K-1}M_\beta(z)^\ell r_\beta(z)
 \right].
 \label{eq:finiteStability}
\end{equation}
This formula is used by the formal-series and rooted-tree code; it is also the link between $\Cseven$ and the finite-sweep method.

\begin{theorem}[Strong-stiff output of a stopped method]
\label{thm:stoppedLimit}
Let $\widehat q_0$ be the anchor column of $Q_2$, define the affine vector
\[
 b_\infty=(D^2B_\beta)^{-1}\widehat q_0,
\]
and let $\bm1$ be the vector of ones.  Then
\begin{equation}
 U_\infty^{[0]}=\bm1,
 \qquad
 R_\infty^{[K]}
 =e_s^T\left[\Minf^K\bm1+
 \sum_{\ell=0}^{K-1}\Minf^\ell b_\infty\right].
 \label{eq:stoppedLimit}
\end{equation}
Consequently, $\rho(\Minf)<1$ controls convergence as $K\to\infty$ but does not imply $|R_\infty^{[K]}|\le1$ for a fixed finite $K$.
\end{theorem}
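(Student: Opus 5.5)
The proof passes to the limit $|z|\to\infty$ term by term in the finite-sweep formula \eqref{eq:finiteStability}. Three limits are needed: the predictor output $U^{[0]}(z)$, the iteration matrix $M_\beta(z)$, and the affine vector $r_\beta(z)$. Since $K$ is fixed, \eqref{eq:finiteStability} is a finite sum of products of these quantities. Once each limit is shown to exist, continuity gives \eqref{eq:stoppedLimit} directly, with $\Minf$ as defined in Section~\ref{sec:targets}.

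\emph{Predictor limit.} On $u'=\lambda u$ each H4 row \eqref{eq:H4} is $U_m=R_{\rm H4}(z\Delta c_m)U_{m-1}$, where
\[
 R_{\rm H4}(w)=\frac{1+w/2+w^2/12}{1-w/2+w^2/12}.
\]
Both quadratic coefficients equal $1/12$, so $R_{\rm H4}(w)\to1$ as $|w|\to\infty$. The product over rows then tends to $1$ at every stage, which gives $U_\infty^{[0]}=\bm1$.

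\emph{Affine vector limit.} From \eqref{eq:correction}, the vector $r_\beta(z)$ equals $P_\beta(z)^{-1}$ applied to the anchor contributions. These are: the explicit anchor terms of $P_\beta$ in row $1$ with the new left stage $u^n$; minus the same terms with the old left stage, which is also $u^n$ and therefore cancels; plus $zq_0+z^2\widehat q_0$ from $\mathcal Q_m$. I will write $P_\beta(z)=z^2\bigl(D^2B_\beta+\OO(z^{-1})\bigr)$. The diagonal of $B_\beta$ is $\beta/2-1/6>0$ for $\beta\ge1/2$, so $D^2B_\beta$ is invertible and
\[
 P_\beta(z)^{-1}=z^{-2}(D^2B_\beta)^{-1}+\OO(z^{-3}).
\]
Multiplying by the anchor right-hand side $z^2\widehat q_0+\OO(z)$ gives $r_\beta(z)\to(D^2B_\beta)^{-1}\widehat q_0=b_\infty$.

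\emph{Iteration-matrix limit.} The same expansion gives $M_\beta(z)\to I-(D^2B_\beta)^{-1}Q_{2,a}$ after the sign bookkeeping, because $A_Q(z)=-z^2Q_{2,a}+\OO(z)$. This limit is $\Minf$. Inserting the three limits into \eqref{eq:finiteStability} yields \eqref{eq:stoppedLimit}.

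The main obstacle is the bookkeeping of anchor terms and signs in the rewriting \eqref{eq:finiteIteration}. In particular I must check that the $\mathcal P$-anchor terms cancel between the new and old sweeps, so that only $\widehat q_0$ survives at order $z^2$. I would handle this by writing row $1$ of \eqref{eq:correction} explicitly and comparing with the matrix form \eqref{eq:Pmatrix}. There is also a consistency check: at a fixed point $U=\bm1$, applying the collocation equation at $\infty$ gives $Q_{2,a}\bm1+\widehat q_0=0$, since the $\psi_j$ integrate to $0$ against constant data. This identity implies $\Minf\bm1+b_\infty=\bm1$, which confirms the signs.

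For the \emph{consequence}, the convergence claim follows from \eqref{eq:stoppedLimit} together with $(I-\Minf)^{-1}b_\infty$: if $\rho(\Minf)<1$, then $\Minf^K\to0$ and the Neumann sum converges, so the limit as $K\to\infty$ exists. To show that no fixed-$K$ bound follows, I will exhibit a counterexample. It suffices to note that \eqref{eq:stoppedLimit} is affine in the transient terms $\Minf^\ell$, which are not controlled by the spectral radius when $\Minf$ is nonnormal. A concrete admissible $(\bm c,\beta)$, for instance $K=2$, can be evaluated exactly to give $\rho(\Minf)<1$ but $|R_\infty^{[K]}|>1$. Alternatively, a $2\times2$ lower-triangular model with small diagonal and large subdiagonal illustrates the mechanism. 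I would compute one such explicit witness from the rational formulas.
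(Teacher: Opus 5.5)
Your overall route is the paper's: take the limits of $U^{[0]}(z)$, $r_\beta(z)$ and $M_\beta(z)$ separately, then iterate the limiting affine map. Your predictor limit via $R_{\rm H4}(w)\to1$ and your limit $r_\beta\to b_\infty$ are fine. Since $\Minf$ is defined as $\lim M_\beta$, existence of that limit is all \eqref{eq:stoppedLimit} needs. Your explicit formula for the limit has the wrong sign, though. From $P_\beta(z)^{-1}A_Q(z)=z^{-2}(D^2B_\beta)^{-1}\bigl(-z^2Q_{2,a}\bigr)+\OO(z^{-1})$ you get $M_\beta=I-P_\beta^{-1}A_Q\to I+(D^2B_\beta)^{-1}Q_{2,a}$, as in \cref{thm:Minf}, not $I-(D^2B_\beta)^{-1}Q_{2,a}$.

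The more serious problem is your ``consistency check'', because it rests on a false identity that undercuts the second half of the theorem. The identity $Q_{2,a}\bm1+\widehat q_0=0$ says $\sum_{j=0}^s\widehat q_{mj}=\int_{c_{m-1}}^{c_m}\sum_j\psi_j\,d\tau=0$ for every $m$. But $\sum_j\psi_j$ is the Hermite interpolant of zero values and \emph{unit} slopes, which is not the zero polynomial. In the stiff limit a constant stage vector still carries derivative data $\lambda^2U_j\neq0$, so the $\psi_j$ do not drop out. At the nodes $(0,\tfrac14,\tfrac34,1)$, for instance, $\sum_j\widehat q_{1j}=-\tfrac{9}{1024}$.

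Worse, if the identity held, then with the correct sign you would have $\Minf\bm1+b_\infty=\bm1$. Since $U_\infty^{[0]}=\bm1$, this would give $R_\infty^{[K]}=1$ for every $K$. The non-implication you are trying to prove would then be false, contrary to \cref{tab:stoppedStability}.

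The correct picture is as follows. The limiting fixed point is $U_\infty^\star=(I-\Minf)^{-1}b_\infty=-Q_{2,a}^{-1}\widehat q_0$, which is the stiff limit of dense collocation. The stopped output is
\[
R_\infty^{[K]}=e_s^TU_\infty^\star+e_s^T\Minf^K(\bm1-U_\infty^\star).
\]
Finite-$K$ amplification therefore comes from the mismatch $\bm1-U_\infty^\star$ between the H4 and collocation stiff limits, and from the size of $e_s^TU_\infty^\star$. Nonnormality of $\Minf$ is not the essential mechanism. A $2\times2$ toy matrix from outside the method family also cannot witness the claim.

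What is needed is one actual design with $\rho(\Minf)<1$ and $|R_\infty^{[K]}|>1$. The paper's proof only asserts this possibility and relies on the computed values in \cref{tab:stoppedStability}, e.g.\ Accuracy-P40 with $\rho(\Minf)=0.3996$ and $R_\infty^{[2]}=1.1995$. Your plan to evaluate one rational design exactly would strengthen that step, provided you first discard the false identity.
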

\begin{proof}
On the scalar test equation, divide the H4 predictor equations by $z^2$ and let $z\to-\infty$.  Only the second-derivative terms remain.  In causal row form they state that each limiting stage equals the previous one; the anchor is one, hence $U_\infty^{[0]}=\bm1$.

The finite-parameter correction is the affine map
$U^{[k+1]}=M_\beta(z)U^{[k]}+b_\beta(z)$.  The matrix limit is $\Minf$ and, by the same leading-order division, the affine term tends to
$b_\infty=(D^2B_\beta)^{-1}\widehat q_0$.  Iterating the limiting affine recurrence gives
\[
 U_\infty^{[K]}=\Minf^K\bm1+
 \sum_{\ell=0}^{K-1}\Minf^\ell b_\infty.
\]
The endpoint projection yields \eqref{eq:stoppedLimit}.  Even when $\rho(\Minf)<1$, a finite power and the accumulated affine forcing can have endpoint magnitude larger than one; only the error between successive iterates is governed asymptotically by the spectral radius.
\end{proof}

\begin{table}[t]
\centering
\caption{Fixed-sweep output stability.  $L_K^-$ is the connected negative-real stability radius estimated on a logarithmic grid; $\infty$ means that no violation was detected up to $-10^5$ and is accompanied by $|R_\infty^{[K]}|<1$.}
\label{tab:stoppedStability}
\small
\begin{tabular}{lrrrr}
\toprule
Method & $R_\infty^{[2]}$ & $L_2^-$ & $R_\infty^{[3]}$ & $L_3^-$\\
\midrule
LGL--L3 & $2.5432$ & $94.1$ & $2.0301$ & $115.7$\\
Accuracy-P40 & $1.1995$ & $340.6$ & $0.8420$ & $\infty$\\
Certified-E7 & $1.3808$ & $225.1$ & $0.5135$ & $\infty$\\
\bottomrule
\end{tabular}
\end{table}

\begin{figure}[t]
\centering
\includegraphics[width=0.84\linewidth]{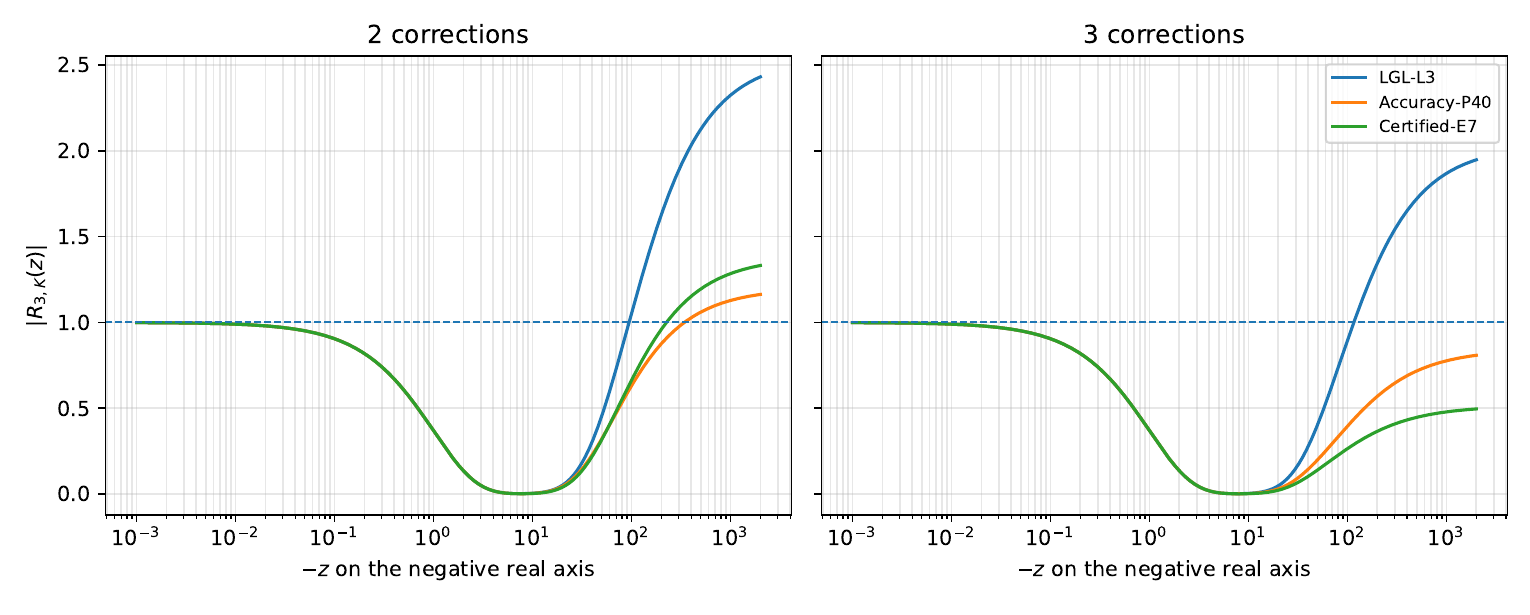}
\caption{Absolute stability on the negative real axis for the methods stopped after two and three corrections.  The curves display the complete interval data summarized in \cref{tab:stoppedStability}; the third correction restores far-stiff damping for both co-designed methods.}
\label{fig:stoppedStability}
\end{figure}

\subsection{Strong-stiff limit}
\begin{theorem}[Strong-stiff correction matrix]
\label{thm:Minf}
For $\beta>1/2$, the correction matrix satisfies
\begin{equation}
 M_\beta(z)\longrightarrow
 \Minf(\bm c,\beta)
 :=I+[D^2B_\beta]^{-1}Q_{2,a},
 \qquad |z|\to\infty.
 \label{eq:Minf}
\end{equation}
The limiting homogeneous correction error is asymptotically contractive if and only if $\rho(\Minf)<1$.
\end{theorem}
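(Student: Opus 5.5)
The plan is to compute the limit of $M_\beta(z)=I-P_\beta(z)^{-1}A_Q(z)$ directly from the two polynomial matrix pencils \eqref{eq:AQ} and \eqref{eq:Pmatrix}. Both are quadratic in $z$ with leading coefficients $-Q_{2,a}$ and $D^2B_\beta$, respectively. Dividing each by $z^2$ gives
\[
 z^{-2}A_Q(z)=-Q_{2,a}-z^{-1}Q_{1,a}+z^{-2}E,
 \qquad
 z^{-2}P_\beta(z)=D^2B_\beta-z^{-1}DA_\beta+z^{-2}E .
\]
Since $P_\beta(z)^{-1}A_Q(z)=\bigl(z^{-2}P_\beta(z)\bigr)^{-1}\bigl(z^{-2}A_Q(z)\bigr)$, the limit as $|z|\to\infty$ will follow from continuity of matrix inversion once we show that the limiting leading coefficient $D^2B_\beta$ is invertible.

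\textbf{Invertibility of $D^2B_\beta$.} By \eqref{eq:Bbeta}, $B_\beta$ is lower bidiagonal with constant diagonal entry $\beta/2-1/6$. For $\beta>1/2$ this entry is at least $1/4-1/6=1/12>0$. The matrix $D$ is diagonal with strictly positive entries $\Delta c_m$ because the nodes are distinct. Hence $\det(D^2B_\beta)=\prod_m\Delta c_m^2(\beta/2-1/6)^s\neq0$. By continuity of inversion, $\bigl(z^{-2}P_\beta(z)\bigr)^{-1}=(D^2B_\beta)^{-1}+\OO(|z|^{-1})$, and in particular $P_\beta(z)$ is nonsingular for all $|z|$ large. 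Multiplying by $z^{-2}A_Q(z)=-Q_{2,a}+\OO(|z|^{-1})$ then gives
\[
 M_\beta(z)=I+(D^2B_\beta)^{-1}Q_{2,a}+\OO(|z|^{-1}),
\]
which is \eqref{eq:Minf}. This holds uniformly in the direction of $z$, since only $|z|$ enters the error terms.

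\textbf{Contractivity.} This is the linear-algebra part. On the limiting problem the homogeneous correction error obeys $\epsilon^{[k+1]}=\Minf\epsilon^{[k]}$, so $\epsilon^{[k]}=\Minf^k\epsilon^{[0]}$. By Gelfand's formula $\|\Minf^k\|^{1/k}\to\rho(\Minf)$, so every error sequence tends to zero if and only if $\rho(\Minf)<1$. For the converse, an eigenvector of modulus $\ge1$ gives a nondecaying error. For the sufficiency direction one can also construct an induced norm in which $\|\Minf\|<1$, which yields contraction in a suitable norm.

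\textbf{Main obstacle.} I expect the difficulty to be bookkeeping rather than anything deep: one must check that the leading $z^2$ coefficients of the actual causal sweep \eqref{eq:correction} are exactly $D^2B_\beta$ for the preconditioner and $-Q_{2,a}$ for the dense operator, with consistent signs. The first check comes from the coefficients $(\tfrac13-\tfrac\beta2)$ and $(\tfrac16-\tfrac\beta2)$ of $\Rtwo$ in \eqref{eq:Pbeta}, with $\delta_m^2=\Delta c_m^2\dt^2$ producing $D^2$. The second comes from the $\widehat q_{mj}$ in \eqref{eq:Q}. I would verify these row by row before invoking the limit argument.
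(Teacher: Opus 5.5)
Your proposal is correct and follows essentially the same route as the paper. You check that $D^2B_\beta$ is nonsingular from its positive diagonal $\beta/2-1/6$, scale both pencils by $z^{-2}$ to obtain $M_\beta(z)=I+(D^2B_\beta)^{-1}Q_{2,a}+\OO(|z|^{-1})$, and then apply the spectral-radius criterion to the powers of $\Minf$; the differences are only cosmetic (continuity of matrix inversion in place of the paper's Neumann expansion, Gelfand's formula in place of the standard power criterion), and your row-by-row sign check against \eqref{eq:Pbeta} and \eqref{eq:Q} agrees with \eqref{eq:Bbeta}.
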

\begin{proof}
The active collocation matrix and the lower-triangular sweep matrix have expansions
\[
 A_Q(z)=-z^2Q_{2,a}-zQ_{1,a}+E,
 \qquad
 P_\beta(z)=z^2D^2B_\beta+zDB_\beta^{(1)}+E,
\]
where $B_\beta^{(1)}$ is the first-derivative sweep matrix.  For $\beta>1/2$, the diagonal of $B_\beta$ is
$\beta/2-1/6>0$, so $D^2B_\beta$ is nonsingular.  Factor the leading term:
\[
 P_\beta(z)=z^2D^2B_\beta
 \left[I+z^{-1}(D^2B_\beta)^{-1}DB_\beta^{(1)}+\OO(z^{-2})\right].
\]
A Neumann expansion gives
\[
 P_\beta(z)^{-1}=z^{-2}(D^2B_\beta)^{-1}+\OO(z^{-3}).
\]
Since $M_\beta(z)=I-P_\beta(z)^{-1}A_Q(z)$, multiplication yields
\[
 M_\beta(z)=I+(D^2B_\beta)^{-1}Q_{2,a}+\OO(z^{-1}),
\]
which proves \eqref{eq:Minf}.  The limiting homogeneous iteration is
$e^{[k+1]}=\Minf e^{[k]}$; in finite dimension its powers converge to zero exactly when $\rho(\Minf)<1$.
\end{proof}

The matrix \eqref{eq:Minf} explains why local $L$-stability is insufficient.  The LGL--L3 baseline has $r_\infty=0$, but its complete stage factor is $\rho(\Minf)=0.519109$.  Moving the nodes and changing $\beta$ can reduce the stage factor even when the endpoint rule is no longer exactly $L$-stable.

\section{Reported configurations and operational interpretation}
\label{sec:designs}
The main comparison uses the classical LGL--L3 baseline and two new configurations.  Their architecture is identical: three subintervals, an H4 predictor, and two accuracy-defining corrections.
\begin{table}[t]
\centering
\caption{Principal configurations.  The tree ratio is relative to LGL--L3.  The printed value of $\beta_E$ is only a decimal label for the algebraic root in \eqref{eq:betaExactDefinition}.}
\label{tab:designs}
\footnotesize\setlength{\tabcolsep}{2.1pt}
\begin{tabular}{lcccccc}
\toprule
Method & $(c_1,c_2)$ & $\beta$ & $\rho(\Minf)$ & $|r_\infty|$ & $\Jstiff$ & tree ratio\\
\midrule
LGL--L3 & $(0.276393,0.723607)$ & $0.666667$ & $0.519109$ & $0$ & $0.519109$ & $1$\\
Accuracy-P40 & $(0.303155,0.721876)$ & $0.572261$ & $0.399570$ & $0.395123$ & $0.399570$ & $0.0979$\\
Certified-E7 & $(7/20,37/50)$ & algebraic & $0.472325$ & $0.177271$ & $0.472325$ & $0$\\
\bottomrule
\end{tabular}
\end{table}

\subsection{Accuracy-P40: work-oriented co-design}
Accuracy-P40 is a feasible best-found point of \eqref{eq:ParetoProblem} with the strict margin $\Jstiff=0.39957<0.40$.  Relative to LGL--L3,
\[
 \frac{\Jstiff(\mathrm{P40})}{\Jstiff(\mathrm{LGL-L3})}=0.770,
 \qquad
 \frac{\Jtree(\mathrm{P40})}{\Jtree(\mathrm{LGL-L3})}=0.0979.
\]
It is still generically sixth order, but the complete nonlinear principal defect is reduced by $90.2\%$ and the strong-stiff asymptotic sweep factor is smaller.  It is intended for calculations in which residual corrections and Krylov work dominate.  The local endpoint rule is not exactly $L$-stable, and the two-correction output is amplified as $z\to-\infty$; a third correction or a smaller macrostep is therefore used when relevant modes enter the far stiff tail.

\subsection{Certified-E7: order-oriented co-design}
Certified-E7 has the same three-subinterval, two-correction architecture but gains one classical order.  Its value of $\Jstiff$ lies between P40 and the LGL--L3 baseline.  It is therefore not the work-minimizing member; its purpose is to exploit the rank-one theorem so that one exact scalar cancellation enforces all 48 seventh-order conditions.  The method is defined by the integer polynomial and rational isolating interval, not by a rounded decimal parameter.  Its order guarantee also requires completion of both corrections and row residuals on the $\OO(\dt^8)$ local scale.

The two methods address different regimes.  P40 is recommended when repeated solution of the Hermite residual is the dominant cost.  Certified-E7 is recommended when truncation error dominates and the added order reduces the number of macrosteps.  Neither is proposed as a universal replacement for black-box stiff solvers, especially when $\Rtwo$ is expensive or unavailable.
\section{Numerical evidence}
\label{sec:numerics}
The experiments answer three questions tied directly to the claims: whether the algebraic method is genuinely seventh order on nonlinear systems, whether fixed-sweep damping behaves as predicted, and whether the strong-stiff objective correlates with reduced correction and Krylov work.

\subsection{Implementation and evaluation protocol}
All correction counts refer to sweeps performed after the H4 predictor.  Unless otherwise stated, the maximum is 25 corrections and the scaled residual \eqref{eq:residualStop} is used for stopping.  The software exposes a minimum correction count; Certified-E7 uses $K_{\min}=2$, and a third correction is used when far-stiff output damping is required.  Dense ODE tests solve the single-state equations by Newton iteration with analytic or finite-difference Jacobians.  Sparse phase-field tests use Newton--GMRES and ILU preconditioning.  The ILU factorization is reused within a stage, and the reported counts include Jacobian and preconditioner constructions.

Errors are measured against tighter reference integrations on the same spatial grid, so the tables isolate temporal and nonlinear-solve effects rather than spatial convergence.  Timing tests are repeated and reported as a median with the first and third quartiles.  The code records $N_f$, $N_g$, Newton iterations, GMRES iterations, and preconditioner builds, because wall time alone is strongly implementation dependent.

The two phase-field models are discretized periodically by centered finite differences.  They represent the nonconserved Allen--Cahn and conserved Cahn--Hilliard gradient-flow settings \cite{AllenCahn1979,CahnHilliard1958}; related SDC treatments of phase-field dynamics provide useful context \cite{YaoXiaXu2024}.  For Allen--Cahn we use a representative semilinear form
\[
 u_t=\varepsilon^2\Delta_hu+u-u^3,
\]
while the Cahn--Hilliard semidiscretization is written
\[
 u_t=\Delta_h\bigl(-\varepsilon^2\Delta_hu+u^3-u\bigr).
\]
The second derivative is evaluated through $\Rtwo(u)=\Rone_u(u)\Rone(u)$.  These finite-difference experiments are intended to test the time iteration; they are not proposed as state-of-the-art spatial phase-field solvers.

\subsection{General nonlinear seventh-order convergence}
The order claim is tested on two independent autonomous three-component systems using 60-decimal-digit arithmetic and a Newton tolerance of $10^{-48}$.  Both vector fields possess a known invariant solution curve but contain nonlinear transverse couplings, so branched elementary differentials are active away from that curve.  This avoids the weakness of a scalar or purely linear test, which can detect only chain-tree order.

For Test A, set $e_2=y-x^2$, $e_3=z-x^3$, and $\alpha=3/10$, and define
\begin{equation}
 f_A(x,y,z)=\begin{pmatrix}
 \alpha x+\frac15e_2+\sin(e_3)\\
 2\alpha y-\frac3{20}e_3+e_2^2\\
 3\alpha z+\frac1{10}e_2+xe_3
 \end{pmatrix},
 \qquad
 u_A(t)=\begin{pmatrix}e^{\alpha t}\\e^{2\alpha t}\\e^{3\alpha t}\end{pmatrix}.
 \label{eq:testA}
\end{equation}
For Test B, let $a=1/5$ and
\begin{equation}
 f_B(x,y,z)=\begin{pmatrix}
 ax^2+\frac17e_2+\frac19e_3^2\\
 2ax^3-\frac18e_3+e_2^2\\
 3ax^4+\frac1{11}e_2+xe_3
 \end{pmatrix},
 \qquad
 u_B(t)=\begin{pmatrix}q\\q^2\\q^3\end{pmatrix},\quad q=(1-at)^{-1}.
 \label{eq:testB}
\end{equation}
Direct substitution verifies both exact solutions.  Each experiment integrates to $T=1$ with exactly two corrections and step counts $1,2,4,8,16,32$.

\begin{figure}[t]
\centering
\begin{subfigure}{0.49\linewidth}
\centering
\includegraphics[width=\linewidth]{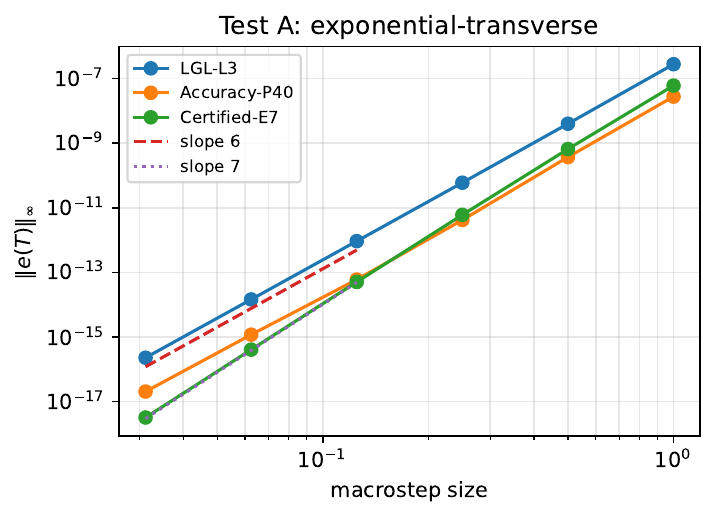}
\caption{Test A in \eqref{eq:testA}.}
\end{subfigure}\hfill
\begin{subfigure}{0.49\linewidth}
\centering
\includegraphics[width=\linewidth]{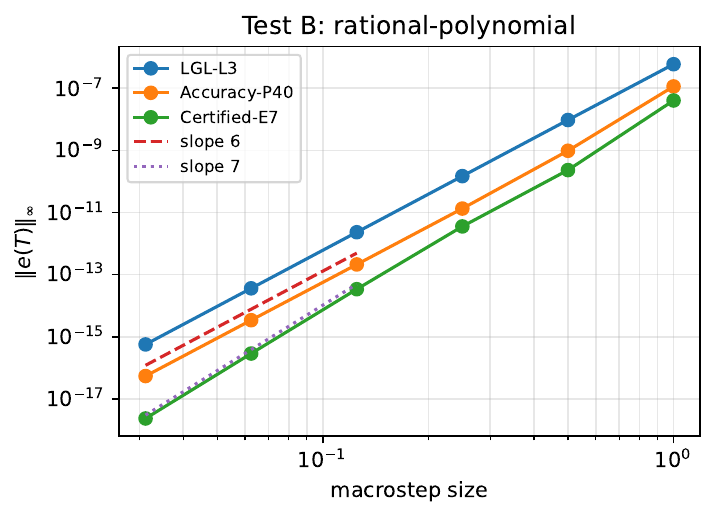}
\caption{Test B in \eqref{eq:testB}.}
\end{subfigure}
\caption{Complete six-level, 60-digit convergence data for both nonlinear tests.  Every plotted point corresponds to $1,2,4,8,16,$ or $32$ macrosteps; reference slopes six and seven are included.}
\label{fig:nonlinear}
\end{figure}

\begin{table}[H]
\centering
\caption{Independent nonlinear order checks at $T=1$.  The triples are the refinement rates on the last three intervals, corresponding to $\Delta t=1/8,1/16,1/32$; $e_{32}$ is the finest-step infinity-norm error.}
\label{tab:nonlinear}
\small
\resizebox{\linewidth}{!}{%
\begin{tabular}{lcccc}
\toprule
Method & Test A rates & $e_{32}^{A}$ & Test B rates & $e_{32}^{B}$\\
\midrule
LGL--L3 & $(6.011,5.995,5.994)$ & $2.28\times10^{-16}$ & $(6.000,5.999,6.000)$ & $5.64\times10^{-16}$\\
Accuracy-P40 & $(6.130,5.663,5.852)$ & $2.04\times10^{-17}$ & $(5.978,5.961,5.974)$ & $5.41\times10^{-17}$\\
Certified-E7 & $(6.901,6.959,6.982)$ & $3.21\times10^{-18}$ & $(6.711,6.881,6.945)$ & $2.33\times10^{-18}$\\
\bottomrule
\end{tabular}}
\end{table}

\Cref{fig:nonlinear,tab:nonlinear} show sixth-order convergence for LGL--L3, sixth-order convergence with a smaller principal error for Accuracy-P40, and convergence to order seven for Certified-E7 on both vector fields.  All three methods use the same nodes-per-step count, H4 predictor, and two completed corrections.  The additional order is therefore attributable to cancellation of the rank-one order-seven defect rather than to an added stage or sweep.  The exact 48-tree arithmetic in \cref{thm:rankone} is the coefficient-level proof; the two dynamical tests provide independent numerical cross-checks rather than replacing that proof.

\subsection{Fixed-sweep stability and an adaptive safeguard}
The distinction in \cref{thm:stoppedLimit} is visible before the PDE tests.  Although all reported new designs have $\rho(\Minf)<1$, the two-correction output limits in \cref{tab:stoppedStability} exceed one.  Thus the certified seventh-order statement and the strong-stiff iteration statement cannot be combined into a claim of two-sweep $A$-stability.  Accuracy-P40 has the largest sampled two-sweep negative-real interval among the principal methods, but its interval remains finite.

The software therefore separates an accuracy requirement from a damping safeguard.  Certified-E7 completes at least two corrections.  When a stiffness estimate places relevant modes beyond the sampled $K=2$ interval, the method either performs a third correction or reduces the macrostep.  This policy preserves the two-correction seventh-order construction while preventing the far-stiff output amplification from being hidden by a correction-error metric.

\subsection{One-dimensional Allen--Cahn mesh stiffness}
The mesh-scaling experiment keeps the time interval, macrostep count, and nonlinear tolerance fixed while the number of spatial unknowns increases.  The increasingly negative diffusive eigenvalues make the strong-stiff part of the spectrum more prominent.  \Cref{tab:mesh1d} show that Accuracy-P40 consistently uses fewer Newton and GMRES iterations and maintains a comparable or smaller error.

The iteration counts and the complete mesh-scaling curves are shown in \cref{tab:mesh1d,fig:mesh1d}.

\begin{table}[t]
\centering
\caption{One-dimensional Allen--Cahn mesh scaling.  Times are medians of repeated runs.}
\label{tab:mesh1d}
\small
\resizebox{\linewidth}{!}{%
\begin{tabular}{lrrrrrr}
\toprule
$n$ & Method & error & mean sweeps & Newton & GMRES & time (s)\\
\midrule
64 & LGL--L3 & $1.62\times10^{-7}$ & $6.5$ & 52 & 168 & 0.0690\\
   & Accuracy-P40 & $1.48\times10^{-7}$ & $4.5$ & 41 & 133 & 0.0555\\
128 & LGL--L3 & $1.65\times10^{-7}$ & $6.0$ & 49 & 199 & 0.0693\\
    & Accuracy-P40 & $1.50\times10^{-7}$ & $4.0$ & 38 & 150 & 0.0532\\
256 & LGL--L3 & $1.84\times10^{-7}$ & $5.0$ & 43 & 236 & 0.0704\\
    & Accuracy-P40 & $1.51\times10^{-7}$ & $3.5$ & 35 & 179 & 0.0564\\
\bottomrule
\end{tabular}}
\end{table}

\begin{figure}[t]
\centering
\includegraphics[width=0.70\linewidth]{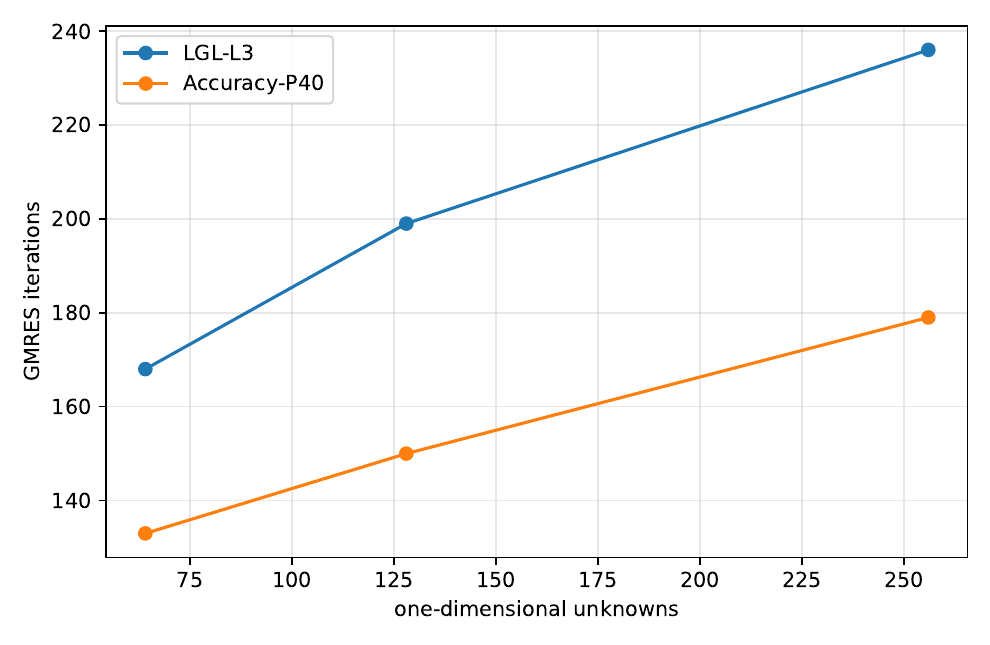}
\caption{One-dimensional Allen--Cahn work under mesh refinement.  The fixed time interval, macrostep count, and nonlinear tolerance isolate the effect of the widening diffusive spectrum.}
\label{fig:mesh1d}
\end{figure}

At $n=256$, the GMRES reduction is $24.2\%$ and the median time reduction in this implementation is about $19.8\%$.  The wall times are not monotone in $n$ because the problems are small and affected by sparse setup overhead; the iteration counts provide the more stable algorithmic comparison.

\subsection{Two-dimensional Cahn--Hilliard structure diagnostic}
The Cahn--Hilliard problem is more severe because the stiff linear part is fourth order.  The periodic semidiscretization also contains a conserved constant mode, so the experiment simultaneously tests a slowly evolving invariant mode and strongly damped fourth-order modes.  \Cref{tab:ch2d} reports accuracy, work, energy, and mass diagnostics.

\begin{table}[t]
\centering
\caption{Two-dimensional Cahn--Hilliard results on a $16^2$ periodic grid.  The energy increase column is the largest positive difference between recorded macrostep energies.}
\label{tab:ch2d}
\small
\resizebox{\linewidth}{!}{%
\begin{tabular}{lrrrrr}
\toprule
Method & error & mean sweeps & GMRES & max energy increase & max mass drift\\
\midrule
LGL--L3 & $4.29\times10^{-7}$ & $4.250$ & 535 & $0$ & $8.24\times10^{-17}$\\
Accuracy-P40 & $4.13\times10^{-7}$ & $2.750$ & 425 & $0$ & $7.89\times10^{-17}$\\
\bottomrule
\end{tabular}}
\end{table}

\begin{figure}[t]
\centering
\includegraphics[width=0.62\linewidth]{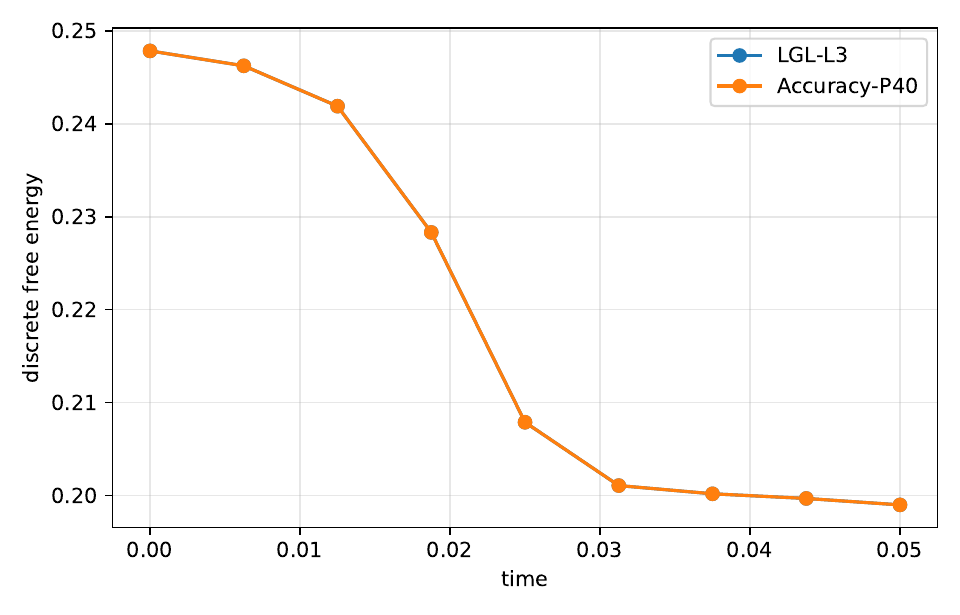}
\caption{Complete recorded discrete free-energy histories for the $16^2$ Cahn--Hilliard experiment.  Both histories decrease at every reported macrostep; this is an observed diagnostic, not an unconditional energy-stability theorem.}
\label{fig:chEnergy}
\end{figure}

Accuracy-P40 reduces the average correction count by $35.3\%$ and the GMRES count by $20.6\%$ relative to LGL--L3.  The numerical mean is preserved to roundoff and the recorded free energy decreases monotonically.  These properties arise from the spatial structure and accurate nonlinear solves; they are observations, not an unconditional energy-stability theorem for the time integrator.

\subsection{Interpretation and external boundary}
The nonlinear tests support the exact order claim, and the phase-field experiments support the residual-work claim.  They play different logical roles: the coefficient-level proof establishes all 48 order conditions, while the numerical tests show that the certified cancellation is visible on nonlinear dynamics with branched elementary differentials.  Likewise, the Allen--Cahn and Cahn--Hilliard tables do not prove mesh-uniform convergence or energy stability; they test whether the proposed strong-stiff objective correlates with practical correction and Krylov work.

The claim is architecture-specific: when two-derivative information is already available or moderately priced, node--sweep co-design can reduce Hermite correction work, and the rank-one defect structure permits an exact seventh-order member without adding a stage or correction.  No universal timing advantage over black-box stiff solvers is asserted.
\section{Discussion}
\label{sec:discussion}
\subsection{Innovation relative to prior work}
The novelty is not high-order Hermite deferred correction by itself.  The first innovation is the joint design of the node distribution and the sequential two-derivative endpoint preconditioner under a fixed, practically constrained architecture.  These variables enter the finite-sweep truncation error and the strong-stiff correction matrix together, so choosing classical nodes and then optimizing only the sweep solves a different problem.

The stronger innovation is the rank-one order-seven defect.  A generic seventh-order B-series has 48 rooted-tree conditions, and a scalar stability-series coefficient normally detects only the chain tree.  Here, a parameter-independent B-series argument proves that the H4 predictor defect is one-directional and that each leading correction acts by unary grafting with a common scalar stage recurrence.  The complete 48-dimensional defect therefore collapses to one dimension after two corrections.  This is what makes the scalar algebraic certificate mathematically legitimate.

The third innovation is the exactness of the certification.  Certified-E7 is not defined by an optimizer output rounded to many digits.  Rational nodes reduce the order-seven chain defect to a primitive quadratic polynomial; rational sign intervals isolate its roots, and a modular gcd calculation proves that the selected root cannot simultaneously cancel the order-eight chain defect.  The resulting method has order exactly seven, not merely observed order seven.

\subsection{Why the selected theory and experiments are indispensable}
Four theoretical components are essential to the central claim: the stopped-method lower bound identifies generic sixth order; the predictor-direction lemma and tree-independent propagation proposition establish rank one; the algebraic root certificate cancels the full seventh-order defect; and the order-eight coprimality argument proves exact rather than at-least seventh order.  The endpoint $A$-stability classification and stopped-output formula are indispensable to interpreting the co-design and avoiding an incorrect claim of two-sweep $A$-stability.

The indispensable numerical evidence is correspondingly compact.  Two independent high-precision nonlinear systems are needed because a scalar or linear problem cannot activate branched elementary differentials.  Fixed-sweep negative-axis data are needed to demonstrate the distinction between residual contraction and returned-output damping.  One diffusion-dominated mesh-scaling test and one fourth-order phase-field test are needed to connect the strong-stiff objective to correction and Krylov work.  The selected experiments are therefore evidence for the two stated advantages rather than a broad benchmark survey.

\subsection{Limitations}
The rank-one identity is specific to the H4 predictor, $s=3$, and two corrections; other predictors or correction counts can produce a higher-dimensional principal defect.  P40 is a best-found multistart design rather than a certified global minimizer.  The matrix-valued stiff theorem is asymptotic on an invertible stiff subspace and does not control every finite-parameter pseudospectrum or nonlinear Newton transient.  The phase-field experiments use moderate finite-difference models and do not establish unconditional energy stability, a maximum principle, or mesh-uniform stiff convergence.  Finally, exact order requires two completed corrections and sufficiently small row residuals; fixed nonlinear tolerances can destroy the asymptotic rate.

\section{Conclusions}
\label{sec:conclusions}
For the three-subinterval H4 predictor with two corrections, the complete order-seven nonlinear defect is rank one, so one scalar coefficient controls all 48 rooted-tree conditions.  Rational nodes and an isolated algebraic correction parameter cancel this coefficient, while coprimality with the order-eight chain polynomial proves that Certified-E7 has classical order exactly seven.  Accuracy-P40 uses the same architecture to reduce the complete principal-error norm and the strong-stiff correction factor, and the Allen--Cahn/Cahn--Hilliard tests show corresponding reductions in correction and Krylov work.  The stopped-output analysis clarifies that a third correction may still be needed for far-stiff damping.  These results make the two designs complementary: Certified-E7 is order-oriented, whereas Accuracy-P40 is work-oriented.

\appendix
\section{Exact coefficient audit for the rank-one certificate}
\label{app:audit}
This appendix gives a direct audit path for every algebraic quantity used in the proof.  Partition the Hermite matrices as $Q_1=[q_0\mid Q_{1,a}]$ and $Q_2=[\widehat q_0\mid Q_{2,a}]$.  On the scalar test equation the active dense stages satisfy $A_Q(z)U=b_Q(z)u^n$, while one correction has the affine form
\begin{equation}
 U^{[k+1]}=M_\beta(z)U^{[k]}+r_\beta(z)u^n,
 \qquad M_\beta(z)=I-P_\beta(z)^{-1}A_Q(z).
 \label{eq:auditIteration}
\end{equation}
Consequently,
\begin{equation}
 U^{[K]}=M_\beta(z)^KU^{[0]}+
 \sum_{\ell=0}^{K-1}M_\beta(z)^\ell r_\beta(z)u^n.
 \label{eq:auditStopped}
\end{equation}
For rational nodes, formal inversion of $P_\beta(z)$ and coefficient extraction from \eqref{eq:auditStopped} use only rational arithmetic.  At the nodes \eqref{eq:Enodes}, extraction of $[z^7]R_{3,2}-1/7!$ and $[z^8]R_{3,2}-1/8!$ gives exactly \eqref{eq:C7rational}--\eqref{eq:p8explicit}.  The rational witness \eqref{eq:genericWitness} follows from the same recurrence at $(c_1,c_2,\beta)=(1/4,3/4,2/3)$.

For the nonlinear audit, formulas \eqref{eq:fComposition}--\eqref{eq:exactTreeRecursion} are applied in increasing tree order.  At each stage, the H4 predictor and correction equation \eqref{eq:correction} are composed as formal B-series.  If the first defect is $h^r\lambda_m^{[k]}e_r$, coefficient collection at order $r+1$ gives exactly
\begin{equation}
 \lambda_m^{[k+1]}=\lambda_{m-1}^{[k+1]}
 +\sum_{j=0}^{s}q_{mj}^{(1)}\lambda_j^{[k]}
 -\Delta c_m\bigl((1-\beta)\lambda_{m-1}^{[k]}+\beta\lambda_m^{[k]}\bigr),
 \label{eq:auditStageRecursion}
\end{equation}
with $\lambda_0^{[k]}=0$.  The $h^2g$ terms first enter at order $r+2$, so \eqref{eq:auditStageRecursion} is tree independent and the only order-$(r+1)$ operation is unary grafting.  Starting with the nine order-five trees, two applications produce exactly nine supported order-seven trees and leave the other $48-9=39$ coefficients zero.  Substitution of the exact-flow coefficients then gives the multipliers in \eqref{eq:rankvalues}; direct summation gives $90$ without symmetry weights and $886$ with symmetry weights.

The accompanying code implements these recurrences with exact integers and rationals.  Its certificate file records the two primitive polynomials, the three rational sign evaluations in \eqref{eq:simpleIsolation}, the nonsquare discriminant, the Euclidean calculation \eqref{eq:modEuclid}, the nonzero resultant, all 48 order-seven checks, and the binary64 evaluation residual.  Thus the computer audit is reproducible, while the parameter-independent argument in \cref{lem:predictorDefect,prop:stagePropagation,thm:rankone} supplies the mathematical proof of the one-dimensional defect image.

\section*{Statements and Declarations}
\noindent\textbf{Funding.} This work was supported by the Henan Provincial Department of Education (Grant No.~26A110007), the Henan Provincial Department of Science and Technology (Grant No.~252300423500), and the Doctoral Research Fund of Henan Polytechnic University (Grant No.~B2024-60).

\printbibliography
\end{document}